\documentclass[a4paper,12pt]{article}

\usepackage{epsfig}
\usepackage{amsfonts,eucal}
\usepackage{amssymb,amsmath,amsthm,color}
\usepackage{authblk}

\usepackage[T2A]{fontenc}
\usepackage[cp1251]{inputenc}
\newtheorem{remark}{Remark}[section]
\newtheorem{theorem}{Theorem}[section]
\newtheorem{proposition}{Proposition}[section]
\newtheorem{lemma}{Lemma}[section]

\newcommand{\R}{{\mathbb R}}

\newcommand{\X}{{\R^d}}

\newcommand{\eps}{\varepsilon}

\begin{document}

\title{Homogenization of non-autonomous evolution problems for convolution type operators in random media.}

\setcounter{footnote}3


\author[1,2,*]{A. Piatnitski 
}
\author[2,1,$\star$]{E. Zhizhina
}
\affil[1]{\small The Arctic University of Norway, Campus Narvik, Norway}
\affil[2]{\small Institute for Information Transmission Problems of Russian Academy of Siences (IITP RAS), Moscow, Russia}
\affil[*]{ email: {\tt apiatnitski@gmail.com} }
\affil[$\star$]{email: {\tt ejj@iitp.ru}}
\date{}

\maketitle

\begin{abstract}

We study homogenization problem for non-autonomous  parabolic equations of the form $\partial_t u=L(t)u$ with an integral convolution type operator $L(t)$ that has
 a non-symmetric jump kernel which is periodic in spatial variables and stationary random in time.
We show that the  homogenized equation is  a SPDE with a finite dimensional multiplicative noise.
\end{abstract}

\section{Introduction and previous results}\label{Intro}

This paper deals with  homogenization problem for a parabolic type equation of the form
\begin{equation}\label{eq_eps}
\frac{\partial u^\varepsilon}{\partial t} \ = \ \frac{1}{\varepsilon^{d+2}} \int\limits_{\X} a\Big(\frac{x-y}{\eps}\Big) \ \mu_\omega \Big(\frac{x}{\eps}, \frac{y}{\eps}; \frac{t}{\varepsilon^2} \Big)\ (u^\varepsilon(y,t) - u^\varepsilon(x,t)) dy,
\end{equation}
here $\varepsilon>0$ is a small parameter, the coefficient $ \mu_\omega(\xi, \eta; \tau)$ is periodic in spatial variables $\xi$ and $\eta$, and stationary random in time, and the functions $\mu_\omega(\cdot)$ and $a(\cdot)$ are not assumed to be symmetric in spatial variables.
We study 
the  asymptotic behaviour of solutions to the Cauchy problem for equation \eqref{eq_eps} with the initial
condition 
\begin{equation}\label{ini_eps}
  u^\eps(x,0)=u_0(x),\quad u_0\in L^2(\mathbb R^d),
\end{equation}
as $\eps\to0$. 
It turns out that the asymptotic behaviour of solutions to the said Cauchy problem depends on the mixing properties of $\mu_\omega$.
In particular, if  $\mu_\omega$ has sufficiently good mixing properties,  then the solution of the original equation converges in law  in  moving coordinates  to a solution of the homogenized equation being a stochastic partial differential equation (SPDE).

It is worth noting that in the case of a symmetric kernel $a(x-y)\mu_\omega(x,y,t)$ the homogenized operator is a deterministic second order parabolic operator with constant coefficients, see Remark \ref{rem_nrlim}. Such a  significant change in the behavior of the limit equation occurs due to the lack of symmetry of the kernel. A convolution type equation with a non-symmetric kernel is a natural non-local counterpart of a convection-diffusion problem with a non-zero convection (drift) term.

Homogenization problems for  non-autonomous local convection-diffusion equations have been considered in \cite{KP},
the coefficients of the operators being random stationary in time and periodic in spatial variables.
 It was proved 
 that under the assumption that the coefficients of the equation have good mixing properties  the solution of the original problem converges in law to the solution of the limit stochastic partial differential equation in properly chosen moving coordinates.

The case of parabolic equations with a symmetric elliptic and a large potential (zero order term) has been treated in \cite{CKP}.

In this paper we follow the scheme developed in the proof of the homogenization results presented in \cite{KP},
however, due to the natural difference between differential and non-local convolution type operators the adaptation
of this scheme is rather non-trivial, it required new ideas and methods.

Averaging of parabolic differential equations of convection-diffusion type with large lower order terms and coefficients being  periodic both in spatial and temporal variables were studied in \cite{G, DP}.
In \cite{G}, the homogenization result was obtained under the assumption that the time oscillation is slower than the spacial one, and the effective drift is equal to zero.
In \cite{DP}, it was proved that the homogenization result holds in rapidly moving coordinates, and in the presence of zero order terms, homogenization takes place after factorization of solutions with the ground state of the corresponding cell problem.

\bigskip

In the autonomous non-local case when  $ \mu(\xi, \eta)$ in equation \eqref{eq_eps} is a periodic non-symmetric function of spatial variables and does not depend on time, the corresponding Cauchy problem admits homogenization in moving coordinates $x^\varepsilon = x-\frac{b}{\varepsilon} t$, where $b \in \X$ is an effective drift,  see \cite{AA}.
Since we borrow some constructions from this paper, we briefly recall here the model and the main result of \cite{AA}.
This paper deals with the Cauchy problems for an equation of the form \eqref{eq_eps} with the initial condition 
$u_0\in L^2(\mathbb R^d)$ under the assumption that 
$\mu(x,y)$ is periodic in $x$ and $y$ and does not depend on time. It is also assumed that $a(\cdot)$ is non-negative,
integrable and has  finite second moments, and $\mu$ satisfies the inequality $0<\alpha_-\leqslant \mu\leqslant\alpha_+$.
It was shown that, under these conditions, the said Cauchy problem admits homogenization in moving coordinates, i.e.
there is a constant vector $b\in\mathbb R^d$ such that for the solution $u^\eps$ of this problem the following representation is valid:
$$
u^\eps(x,t)=u^0\Big(x-\frac b\eps t,t\Big)+ R^\eps(x,t), \qquad\hbox{where }\ 
\lim\limits_{\eps\to0}\|R^\eps\|_{L^\infty(0,T;L^2(\mathbb R^d))}=0,
$$
and $u^0(x,t)$ is a solution of the Cauchy problem for the homogenized second order parabolic operator
 with constant coefficients that reads
$$
\partial_t u-\mathrm{div}\big(\Theta^{\mathrm{eff}}\nabla u\big)=0,\qquad u(x,0)=u_0(x),
$$
with the same initial condition $u_0$. 

The effective drift $b$ is defined by
\begin{equation}\label{v0}
 b \ = \  \int\limits_{\mathbb R^d}  \int\limits_{\mathbb T^d} a (\xi-q) \mu (\xi, q )  (\xi - q ) \ dq \, v_0 (\xi) d \xi,
\end{equation}
where $v_0 \in L^2(\mathbb{T}^d)$ is the solution of equation
\begin{equation}\label{FA1}
\int\limits_{\mathbb R^d} a (q - \xi) \mu (q,\xi ) \big( v_0 (q)
-v_0(\xi)\big) d q  = 0, \quad\int_{\mathbb T^d} v_0(\xi)d \xi =1.
\end{equation}
It is proved in \cite{AA} that there exist two positive constants $\gamma_1, \gamma_2>0$ such that
$0< \gamma_1 \le v_0 (\xi) \le \gamma_2 < \infty$ for all $\xi \in  \mathbb{T}^d$.

 The homogenization problem for non-autonomous convolution type equations with coefficients periodic
 in all variables including time  
has been addressed in \cite{PZh2023}.

\medskip
In the present work we consider the case of non-autonomous random in time equations.  More precisely, we assume that
the coefficient $\mu(\xi,z,s)$ in \eqref{eq_eps} is a stationary ergodic random function of  $s$ with values
in $L^\infty(\mathbb T^{2d})$. 
In the second section we provide the assumptions on the coefficients and formulate the main homogenization
results of this work.
Section three deals with formal asymptotic expansion of a solution and various auxiliary ''cell'' problems.
Solutions of these problems play crucial role in deriving the homogenized problem and proving the convergence.  
The proof of our homogenization results in the case of general stationary ergodic function $\mu$ is provided in
Section 4.
These results are then refined in Section 5 under the assumptions that $\mu(\cdot,s)$ has good mixing properties. 
The special case of $\mu_\omega(x,y,t)$ being the product $\mu_\omega(x,y,t)=\lambda_\omega(t)\hat\mu(x,y)$
is investigated in Section 6. 
The proof of technical statements is given in Appendices.

\section{Problem setup and main results}\label{sec_setup}

The goal of the present work is to study a homogenization problem for non-autonomous  parabolic type problems with non-symmetric kernels in stationary ergodic in time and periodic in space media:
\begin{equation}\label{ANA_eps}
\begin{array}{l}
H^\varepsilon_\omega u := \frac{\partial u}{\partial t} -  L^{\varepsilon}_\omega u = 0, \quad u(\cdot,0)=u_0\in
L^2(\mathbb R^d),\quad \mbox{where} \\[2mm]
(L^\varepsilon_\omega u)(x,t) \ = \ \frac{1}{\varepsilon^{d+2}} \int\limits_{\mathbb R^d} a\big(\frac{x-y}{\eps}\big) \mu_\omega\big(\frac{x}{\eps}, \frac{y}{\eps}; \frac{t}{\eps^2}\big) (u(y,t) - u(x,t)) dy,
\end{array}
\end{equation}
and $a(z)$ satisfies the following conditions:
\begin{equation}\label{M1}
  a(z)\geqslant 0, \quad \int_{\mathbb R^d}a(z)dz=1,\quad \int_{\mathbb R^d}|z|^2a(z)dz<\infty.
\end{equation}
We assume that the periodic media evolves in time in a random manner. { Namely, given a probability space  $(\Omega, {\cal F}, P)$, we define the random coefficient $\omega \mapsto \mu_\omega(x,y; t)$, $\omega\in\Omega$, 
as a measurable mapping from $\Omega$ to $L^\infty(\mathbb R^{2d}\times \mathbb R)$, and assume that 
$ \mu_\omega(x,y; t)$ is periodic in $x$ and $y$ with the period $[0,1)^d$ and that 
 there exist positive constants $\alpha_1$ and $\alpha_2$ such that
\begin{equation}\label{lm-random}
0<\alpha_1 \le  \mu_\omega (x,y; t) \le \alpha_2 \quad \mbox{ for all } \; x,y, \in \mathbb{R}^d, \; t \in \mathbb{R}.
\end{equation}
We then suppose that  the field  $ \mu_\omega(x, y; t)$ is {\bf stationary and ergodic} in time.
More precisely, it is assumed that there is a measure preserving dynamical system $ \tau_s:\Omega \to \Omega$,
$s\in\mathbb R$, such that:
$$
\mu_\omega(x, y; t+s)= \mu_{\tau_s \omega}(x,y;t), \qquad t,s \in \mathbb{R}, \; \omega \in \Omega.
$$
This property is referred to as stationarity. In particular, this implies that the law of the random function 
$\mu_{\omega}(x, y;t)$ is independent of $t$. Furthermore, we assume that the dynamical system $\tau_s$ is ergodic, 
that is if $A \subset \Omega$ is such that $\tau_s A=A$ for all $s \in \mathbb{R}$, then $P(A) = 0$ or $1$.

In addition to these general assumptions on $ \mu_\omega(x, y; t)$, in the second part of  Theorem \ref{MT} we also suppose that  $ \mu_\omega(x, y; t)$ satisfies good {\bf mixing conditions}. Under these mixing conditions a more precise
 result on the homogenization of problem \eqref{ANA_eps} is valid, see \eqref{th-4-II} below. We formulate here the mixing conditions in terms of the $\sigma$-algebra $\mathcal{F}_t$ associated with the random process $\mu_\omega$  following the definition from \cite{ShJ}. The mapping $(\omega,t) \to  \mu_\omega(x, y; t) \in 
 L^\infty(\mathbb{T}^{2d})$ is ${\cal F} \otimes \mathbb{R}$-measurable, and we set
$$
{\cal F}^\mu_{\le t} = \sigma \{ \mu(\cdot, \cdot, s): \ s \le t \}, \quad {\cal F}^\mu_{\ge t} = \sigma \{ \mu(\cdot, \cdot, s): \ s \ge t \}.
$$
The function $\alpha(t)$ defined as
\begin{equation}\label{alpha}
\alpha(t) = \alpha ({\cal F}^\mu_{\le 0}, \ {\cal F}^\mu_{\ge t} ) = \sup\limits_{A \in {\cal F}^\mu_{\le 0},\, B \in {\cal F}^\mu_{\ge t} } |P(A \cap B) - P(A) P(B)|
\end{equation}
is called the {strong mixing coefficient} of $\mu$. We assume that
\begin{equation}\label{alpha-1}
 \int\limits_0^{\infty}  \alpha^{1/2}(t)  dt < \infty.
\end{equation}

Denote by $u^{\varepsilon}_\omega(x,t)$ a solution of the evolution problem
\begin{equation}\label{th-2}
\frac{\partial u (x,t)}{\partial t} = L^{\varepsilon} u (x,t), \quad u(x,0) = u_0(x), \quad u_0 \in L^2(\mathbb R^d),
\end{equation}
$t \in [0, T]$, where the operator $ L^{\varepsilon}$ is defined in  \eqref{ANA_eps}.
The main result of the present work is the following statement:

\begin{theorem}\label{MT}
I. Let $a(z)$ satisfy condition \eqref{M1},
and assume that $\mu_\omega (x,y,t)$ is a stationary ergodic process with values in $L^\infty(\mathbb{T}^{2d})$ that
satisfies estimates \eqref{lm-random} for all $\omega \in \Omega$. 
Then there exists a positive definite symmetric matrix $\Theta$ and a vector-valued stationary ergodic
process $\beta_\omega(t) \in {\mathbb R^d}$ such that for any $T>0$ and for a.e. $\omega \in \Omega$
\begin{equation}\label{th-4-I}
\| u^{\varepsilon}_\omega ( x + \frac{b}{\eps}\ t + G^\eps_0 (t), \ t ) -
u^0 (x,\ t)\|_{L^{\infty}((0,T),\ L^2(\mathbb R^d) )}  \to 0, \quad \mbox{as } \; \varepsilon \to 0,
\end{equation}
where $u^0(x,t)$ is a solution of the Cauchy problem
\begin{equation}\label{th-3}
\frac{\partial u}{\partial t} = \Theta \cdot \nabla \nabla u, \quad u(x,0) = u_0(x), \quad u_0 \in L^2(\mathbb R^d), \quad t \in [0, T],
\end{equation}
$b=\mathbb{E}\beta_\omega(0)$, and
\begin{equation}\label{G0}
G^\varepsilon_0 (t)= \frac{1}{\eps} \int\limits_0^{t} \mathop{ \beta_\omega}\limits^{\circ}(\frac{s}{\varepsilon^2}) ds = \varepsilon \int\limits_0^{t/\varepsilon^2} \mathop{ \beta_\omega}\limits^{\circ} (\tau) d\tau = o(\eps^{-1}), \quad \mathop{ \beta_\omega}\limits^{\circ} (t) = \beta_\omega(t) - b.
\end{equation}

\medskip

II. If in addition the random field  $\mu_\omega (x,y; t), \ t \in \R,$ satisfies the mixing conditions \eqref{alpha-1}, then
\begin{equation}\label{th-4-II}
 u^{\varepsilon}_\omega ( x + \frac{b}{\eps}\ t, \ t )  \  \mathop{\rightarrow}\limits^{{\cal L}} \
u^0 (x -  \sigma W_t,\ t), \quad t \in [0, T], \qquad \mbox{as } \; \varepsilon \to 0,
\end{equation}
in the space $L^2([0,T];L^2(\mathbb R^d))$ equipped with the strong topology;
here $\mathop{\rightarrow}\limits^{{\cal L}} $ stands for the convergence in law, $W_t$ denotes a standard Wiener process in $\mathbb R^d$,  and
$$
\sigma \sigma^* = 2 \int\limits_0^{\infty} \mathbb{E} \big( \mathop{ \beta}\limits^{\circ}(0) \otimes \mathop{ \beta}\limits^{\circ}(t) \big) \, d t.
$$
\end{theorem}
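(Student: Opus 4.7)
The plan is to follow the scheme developed by Kleptsyna--Piatnitski \cite{KP} for local convection--diffusion equations, replacing the differential operator there by the convolution operator $L^\varepsilon$, and borrowing from \cite{AA} the use of the invariant density of the leading cell problem. For a.e.\ $\omega\in\Omega$ and every $t\in\R$ I would solve the time-frozen analogue of \eqref{FA1} associated with the adjoint of the leading cell operator, obtaining a positive normalized solution $v_\omega(\xi;t)\in L^2(\mathbb{T}^d)$ that inherits from \cite{AA} the uniform bounds $0<\gamma_1\le v_\omega\le\gamma_2$. Stationarity of $\mu_\omega$ in $t$ transfers to $v_\omega$, and setting
\begin{equation*}
\beta_\omega(t)=\int_{\R^d}\!\int_{\mathbb{T}^d} a(\xi-q)\mu_\omega(\xi,q;t)(\xi-q)\,dq\,v_\omega(\xi;t)\,d\xi
\end{equation*}
produces the stationary ergodic drift process of the theorem, with $b=\mathbb{E}\beta_\omega(0)$.

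Next I would perform the change of variables $x\mapsto x+\frac{b}{\eps}t+G^\eps_0(t)$ with $G^\eps_0$ as in \eqref{G0}; the Birkhoff ergodic theorem applied to $\mathop{\beta_\omega}\limits^{\circ}$ gives $G^\eps_0(t)=o(\eps^{-1})$ a.s., uniformly on $[0,T]$. In this frame the full instantaneous drift generated by the frozen cell problem (both the mean part $b/\eps$ and the mean-zero oscillation $\eps^{-1}\mathop{\beta_\omega}\limits^{\circ}(t/\eps^2)$) is subtracted. Inserting the two-scale ansatz $u^\eps(x,t)\sim u^0(x,t)+\eps\,\chi_1(x/\eps,t/\eps^2,\omega)\cdot\nabla u^0+\eps^2\chi_2(x/\eps,t/\eps^2,\omega)$, pairing the resulting identities with $v_\omega(\cdot;t/\eps^2)$, and invoking the Fredholm alternative (solvability against $v_\omega$) yields the cell problems for $\chi_1,\chi_2$. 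The very definition of $\beta_\omega$ forces the $\eps^{-1}$ balance; the $\eps^0$ balance, after ergodic averaging in $\omega$, gives \eqref{th-3} with a constant symmetric positive-definite matrix $\Theta$ expressed as a $v_\omega$-weighted Dirichlet form evaluated on the correctors and their symmetrizations.

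For Part I, the strong convergence in $L^\infty(0,T;L^2(\R^d))$ would then be obtained by the perturbed test function method: use the truncated ansatz as competitor, exploit the fact that after conjugation by $v_\omega$ the leading operator is symmetric in a weighted $L^2$ (this algebraic duality replaces the unavailable integration by parts for $L^\varepsilon$), and combine the uniform $L^2$ energy bound on $u^\eps$ (immediate from \eqref{lm-random}) with corrector bounds to pass to the limit a.s.\ in $\omega$. For Part II, the mixing assumption \eqref{alpha-1} lets me apply a functional invariance principle for continuous-time strongly mixing stationary processes (cf.\ \cite{ShJ}) to $\mathop{\beta_\omega}\limits^{\circ}$, yielding $G^\eps_0(\cdot)\Rightarrow \sigma W_\cdot$ in $C([0,T];\R^d)$ with the stated covariance. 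A Skorokhod coupling combined with the a.s.\ statement of Part I and the strong continuity of the spatial shift on $L^2(\R^d)$ upgrades this to \eqref{th-4-II}.

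The main obstacle is the homogenization step. Without a chain rule or integration by parts for $L^\varepsilon$, the cancellations that in \cite{KP} come from differentiating correctors have to be extracted algebraically, through the duality between $L^\varepsilon$ and its adjoint in the space weighted by $v_\omega(x/\varepsilon;t/\varepsilon^2)$, and through delicate control of commutators between $L^\varepsilon$ and the random shift by $G^\eps_0(t)$. A secondary difficulty is that $v_\omega(\xi;t)$ is only jointly measurable in $(t,\omega)$ with no a priori regularity in $t$: identities that would classically involve $\partial_t v_\omega$ must be recovered by integration along the ergodic flow $\tau_s$ or by mollification, and uniform $\omega$-estimates on the correctors that persist in the limit must be established by a perturbation argument around the frozen cell problem of \cite{AA}.
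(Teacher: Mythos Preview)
Your frozen-time approach has a structural gap at the very first step. Because the first corrector enters the ansatz as $\eps\,\varkappa_1(x/\eps,t/\eps^2)\cdot\nabla u$, the time derivative $\partial_t$ produces a contribution $\eps^{-1}\partial_s\varkappa_1$ at the \emph{same} order as the spatial cell term $A_\omega(s)\varkappa_1$ coming from $L^\eps$. The $\eps^{-1}$ balance is therefore the \emph{parabolic} equation
\[
\partial_s\varkappa_1(\xi,s)=A_\omega(s)\varkappa_1(\xi,s)-\int_{\R^d} z\,a(z)\,\mu_\omega(\xi,\xi-z;s)\,dz+\beta_\omega(s),
\]
and the condition for this to admit a stationary solution bounded in $L^\infty(\R;L^2(\mathbb T^d))$ is that the right-hand side be orthogonal, for each $s$, to the stationary solution $p_\infty(\xi,s)$ of the adjoint \emph{parabolic} problem $-\partial_s p=A_\omega^\ast(s)p$, not to the kernel of the frozen operator $A_\omega(s)^\ast$. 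Your $v_\omega(\xi;t)$ solves the latter, so your $\beta_\omega$ enforces the wrong compatibility; with that choice $\varkappa_1$ will in general fail to be uniformly bounded and the remainder estimates collapse. This is not the regularity issue you flag as a ``secondary difficulty'': even if $\partial_t v_\omega$ made sense, the Fredholm alternative you invoke applies to the frozen elliptic problem, not to the evolution equation that actually governs $\varkappa_1$. The paper instead constructs $p_\infty$ as the limit $p_\infty(\cdot,s)=\lim_{N\to\infty}p^N(\cdot,s)$ of solutions to the backward problems $-\partial_s p^N=A_\omega^\ast(s)p^N$, $p^N|_{s=N}=1$, shows it is stationary with deterministic two-sided bounds, defines $\beta_\omega(s)=\int\!\int z\,a(z)\,\mu_\omega(\xi,\xi-z;s)\,p_\infty(\xi,s)\,d\xi\,dz$, and then obtains both correctors from a Duhamel representation, using an exponential contraction estimate for the evolution operator $U(t,\tau)$ on data orthogonal to $p_\infty$.

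This correction propagates to Part~II. With the right density $p_\infty$, the value $\beta_\omega(s)$ depends on $\mu_\omega(\cdot,\cdot;r)$ for all $r\ge s$, so $\beta_\omega$ is not adapted to the filtration of $\mu$ and the strong-mixing coefficient of $\mu$ does not pass to $\beta$ for free. One has to split $\mathop{\beta}\limits^{\circ}(0)$ into a piece built from the backward solution $p_1$ with terminal time $t/2$, which is ${\cal F}^\mu_{\le t/2}$-measurable, and a remainder controlled by $\|p_\infty(\cdot,0)-p_1(\cdot,0)\|\le C e^{-\gamma t/2}$; only after this decomposition does \eqref{alpha-1} yield $\int_0^\infty\|\mathbb E(\mathop{\beta}\limits^{\circ}(0)\mid{\cal F}^\beta_{\ge t})\|_2\,dt<\infty$ and hence the invariance principle for $G^\eps_0$. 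Your Skorokhod-coupling conclusion is fine once this is in place.
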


\begin{remark}{\rm
It is worth noting that the function $v^0(x,t) =  u^0(x- \sigma W_t, \, t)$ is a solution of the following SPDE:
 \begin{equation}\label{SPDE-2}
\left\{
\begin{array}{l} \displaystyle
d v^0(x,t) = A^{\mathrm{eff}} \cdot \nabla\nabla v^0(x,t) dt -  \nabla v^0 \cdot \sigma \, d W_t \\[3mm]
\displaystyle v^0(x,0) = u^0(x,0) = u_0(x),
\end{array}
\right.
\end{equation}
with
$A^{\mathrm{eff}} = \Theta+ \frac12 \sigma \sigma^* $.
Indeed, it is sufficient to apply the It\^o formula 
to the function $u^0(x- \sigma W_t, \, t)$ and to use  \eqref{th-3}.
}
\end{remark}

\medskip

The more detailed formulation of Theorem \ref{MT} and its proof are given in Sections \ref{Proof-1} - \ref{Proof-3}. 
In particular, all the effective characteristics of the limit problem are expressed explicitly in terms of
solutions of auxiliary problems on the torus, see \eqref{b}, \eqref{Beta}, \eqref{Theta}.
%

\section{Proof of Theorem \ref{MT}: Ansatz and correctors}\label{Proof-1}

\subsection{Ansatz for solution}

In this section we construct an approximation for a solution  $u^\varepsilon$ of problem \eqref{th-2}. Denote by 
$\mathcal{S}(\mathbb{R}^d)$  the Schwartz class of functions in $\mathbb R^d$.
For a given $u \in C^{\infty}((0,T),{\cal{S}}(\mathbb R^d))$  we define the following ansatz:
\begin{equation}\label{w_eps}
\begin{array}{l}
\displaystyle
w^{\varepsilon}(x,t)  =  u ( x^\varepsilon\!,  t )
+ \varepsilon \varkappa_1 (\frac{x}{\varepsilon}, \frac{t}{\varepsilon^2})\cdot\! \nabla u  ( x^\varepsilon\!,  t )
+ \varepsilon^2 \varkappa_2 (\frac{x}{\varepsilon}, \frac{t}{\varepsilon^2})\cdot\!\nabla \nabla u  ( x^\varepsilon\!,t ),
\end{array}
\end{equation}
where
\begin{equation}\label{G}
x^\varepsilon = x -  \frac{1}{\varepsilon} G^\varepsilon(t), \qquad
G^\varepsilon(t)= \int\limits_0^{t} \beta(\frac{s}{\varepsilon^2}) ds = \varepsilon^2 \int\limits_0^{t/\varepsilon^2} \beta(\tau) d\tau,
\end{equation}
$\beta(s) = \beta_\omega(s) \in \mathbb{R}^d$ 
is a  bounded stationary ergodic vector process  with the mean
\begin{equation}\label{b}
\mathbb{E} \beta_\omega = b \in \mathbb{R}^d;
\end{equation}
$ \varkappa_1(\xi, s) = \varkappa_1^\omega(\xi, s)= \{ \varkappa^i_1(\xi,s), i = 1, \ldots, d \}$ and $\varkappa_2(\xi, s) = \varkappa_2^\omega(\xi, s) = \{ \varkappa^{ij}_2(\xi,s), i,j = 1, \ldots, d\}$ are stationary fields taking values in $(L^2(\mathbb T^d))^d$  and $ (L^2(\mathbb T^d))^{d^2}$ respectively. The process $\beta_\omega(s)$ as well as the random correctors  $\varkappa_1 (\xi, s), \,  \varkappa_2 (\xi, s)$  will be defined below.

Our goal is to construct the correctors $\varkappa_1$, $\varkappa_2$ and the moving coordinates $x^\eps$ in such a way
 that  $\partial_t w^{\varepsilon}(x,t)-(L^\eps w^{\varepsilon})(x,t)\approx \partial_t u(x,t)-\mathrm{div} \big(\Theta^{\mathrm{eff}} 
 \nabla u\big)(x,t) $.

In what follows for the sake of brevity we do not indicate explicitly the dependence on $\omega$, unless it leads to 
an ambiguity. 

Substituting $w_\eps$ for $u$ in \eqref{ANA_eps}  we have
\begin{equation}\label{Aw}
H^{\varepsilon}_\omega w^{\varepsilon}(x,t) \
= \ \frac{\partial w^{\varepsilon}(x, t)}{\partial t} - L^{\varepsilon}_\omega w^{\varepsilon}(x,t)
\end{equation}
with
\begin{equation}\label{Aw-1}
\begin{array}{l}
\displaystyle
\frac{\partial w^{\varepsilon}(x, t)}{\partial t} =
 - \frac{1}{\varepsilon} \beta(\frac{t}{\eps^2}) \cdot \nabla u (x^\varepsilon, t)  + \frac{\partial u}{\partial t} (x^\varepsilon, t)
\\[3mm] \displaystyle
+ \frac{1}{\varepsilon} \frac{\partial \varkappa_1}{\partial t} \big(\frac{x}{\varepsilon}, \frac{t}{\varepsilon^2} \big) \cdot \nabla u (x^\varepsilon, t)
+ \varepsilon \varkappa_1 \big(\frac{x}{\varepsilon}, \frac{t}{\varepsilon^2}\big) \otimes
\big( - \frac{1}{\varepsilon} \beta(\frac{t}{\eps^2}) \big) \cdot \nabla \nabla u (x^\varepsilon, t)
\\[3mm] \displaystyle
 +  \varepsilon \varkappa_1 \big(\frac{x}{\varepsilon}, \frac{t}{\varepsilon^2}\big)  \cdot \nabla \frac{\partial u}{\partial t} (x^\varepsilon, t)
 + \frac{\partial \varkappa_2}{\partial t} \big(\frac{x}{\varepsilon}, \frac{t}{\varepsilon^2} \big) \cdot \nabla \nabla u (x^\varepsilon, t)
\\[3mm] \displaystyle
+ \varepsilon^2 \varkappa_2 \big(\frac{x}{\varepsilon}, \frac{t}{\varepsilon^2}\big) \!\otimes\!
\big(\! - \!\frac{1}{\varepsilon} \beta(\frac{t}{\eps^2}) \big)\! \cdot \!\! \nabla \nabla \nabla u (x^\varepsilon\!, t)\! + \! \varepsilon^2 \varkappa_2 \big(\frac{x}{\varepsilon}, \frac{t}{\varepsilon^2}\big) \! \cdot \! \nabla \nabla \frac{\partial u}{\partial t} (x^\varepsilon\!, t)
 \end{array}
 \end{equation}
 and
\begin{equation}\label{Aw-2}
\begin{array}{l}
\displaystyle
(L^{\eps}_\omega w^{\varepsilon})(x,t) =  \frac{1}{\varepsilon^{d+2}} \int\limits_{\mathbb R^d} a\big(\frac{x-y}{\eps}\big) \mu_\omega\big(\frac{x}{\eps}, \frac{y}{\eps}; \frac{t}{\eps^2}\big) (w^{\varepsilon}(y,t) - w^{\varepsilon}(x,t)) dy
\\[3mm] \displaystyle = \
\frac{1}{\varepsilon^{d+2}} \int\limits_{\mathbb R^d} a \big( \frac{x-y}{\varepsilon} \big) \mu_\omega \big( \frac{x}{\varepsilon}, \frac{y}{\varepsilon}; \frac{t}{\eps^2} \big)
\bigg\{ u(y^\varepsilon,t)+ \varepsilon \varkappa_1 \big(\frac{y}{\varepsilon}, \frac{t}{\eps^2}\big)\cdot \nabla u(y^\varepsilon,t) +
\\[3mm] \displaystyle +\
\varepsilon^2 \varkappa_2 \big(\frac{y}{\varepsilon}, \frac{t}{\eps^2}\big)\cdot \nabla \nabla u(y^\varepsilon,t) -
u(x^\varepsilon,t)-\varepsilon \varkappa_1 \big(\frac{x}{\varepsilon},  \frac{t}{\eps^2} \big)\cdot \nabla u(x^\varepsilon,t)
\\[3mm] \displaystyle
 - \ \varepsilon^2 \varkappa_2 \big(\frac{x}{\varepsilon}, \frac{t}{\eps^2}\big)\cdot \nabla \nabla u(x^\varepsilon,t) \bigg\} dy.
\end{array}
\end{equation}
The symbols $\cdot$ and $\otimes$ stand for the scalar product and the tensor product,respectively.

We collect in \eqref{Aw-1} - \eqref{Aw-2} power-like terms and keep only the terms of order $\varepsilon^{-1}$ and $1$;
the higher order  terms form the  remainders $\phi^\varepsilon_{1,2}$. 
For $\frac{\partial w^{\varepsilon}}{\partial t}$ we obtain
\begin{equation}\label{Aw-t}
\begin{array}{l}
\displaystyle
\frac{\partial w^{\varepsilon}(x, t)}{\partial t} =
 - \frac{1}{\varepsilon}  \beta(\frac{t}{\eps^2})  \cdot \nabla u (x^\varepsilon, t)  +
 \frac{1}{\varepsilon} \frac{\partial \varkappa_1}{\partial t} \big(\frac{x}{\varepsilon}, \frac{t}{\varepsilon^2} \big) \cdot \nabla u (x^\varepsilon, t)
\\[3mm] \displaystyle
+ \frac{\partial u}{\partial t} (x^\varepsilon, t) - \varkappa_1 \big(\frac{x}{\varepsilon}, \frac{t}{\varepsilon^2}\big) \otimes
 \beta(\frac{t}{\eps^2})  \cdot \nabla \nabla u (x^\varepsilon, t)
\\[3mm] \displaystyle
 + \frac{\partial \varkappa_2}{\partial t} \big(\frac{x}{\varepsilon}, \frac{t}{\varepsilon^2} \big) \cdot \nabla \nabla u (x^\varepsilon, t)  + \phi^\varepsilon_1 (x,t),
 \end{array}
 \end{equation}
with
\begin{equation}\label{Aw-t-r}
\begin{array}{l}
\displaystyle
\phi^\varepsilon_1(x,t) =  \varepsilon \varkappa_1 \big(\frac{x}{\varepsilon}, \frac{t}{\varepsilon^2}\big)  \cdot \nabla \frac{\partial u}{\partial t} (x^\varepsilon, t) -
 \varepsilon \varkappa_2 \big(\frac{x}{\varepsilon}, \frac{t}{\varepsilon^2}\big) \otimes
 \beta(\frac{t}{\eps^2})  \cdot \nabla \nabla \nabla u (x^\varepsilon, t)
\\[3mm] \displaystyle
+ \varepsilon^2 \varkappa_2 \big(\frac{x}{\varepsilon}, \frac{t}{\varepsilon^2}\big)  \cdot \nabla \nabla \frac{\partial u}{\partial t} (x^\varepsilon, t)
\end{array}
\end{equation}

After change of variables $z = \frac{x-y}{\varepsilon} = \frac{x^\varepsilon - y^\varepsilon}{\varepsilon}$ we get
\begin{equation}\label{ml_1}
\begin{array}{l}\displaystyle
(L^{\varepsilon} w^{\varepsilon})(x,t) \ = \ \frac{1}{\varepsilon^{2}} \int\limits_{{\mathbb R}^d} dz \  a (z)  \mu \big(\frac{x}{\varepsilon}, \frac{x}{\varepsilon} -z;  \frac{t}{\eps^2} \big) \bigg\{ u(x^\varepsilon -\varepsilon z,t)
\\[3mm] \displaystyle
+ \varepsilon \varkappa_1 \big(\frac{x}{\varepsilon}-z;  \frac{t}{\eps^2} \big)\cdot \nabla u (x^\varepsilon -\varepsilon z,t)
+\,\varepsilon^2 \varkappa_2 \big( \frac{x}{\varepsilon}-z,  \frac{t}{\eps^2} \big)\cdot \nabla \nabla u(x^\varepsilon -\varepsilon z,t)
\\[3mm] \displaystyle
 - u(x^\varepsilon,t) -\varepsilon \varkappa_1 \big( \frac{x}{\varepsilon},  \frac{t}{\eps^2} \big)\cdot\nabla u(x^\varepsilon,t) - \varepsilon^2 \varkappa_2 \big(\frac{x}{\varepsilon},  \frac{t}{\eps^2} \big)\cdot \nabla \nabla u(x^\varepsilon,t) \bigg\}
\\[3mm] \displaystyle
= \frac{1}{\varepsilon} \nabla u(x^\varepsilon,t)\! \cdot\! \int\limits_{\mathbb R^d}  \Big\{ -z + \varkappa_1 \big(\frac{x}{\varepsilon}-z,  \frac{t}{\eps^2} \big) - \varkappa_1 \big(\frac{x}{\varepsilon},  \frac{t}{\eps^2}\big) \Big\}  a (z) \mu \big( \frac{x}{\varepsilon}, \frac{x}{\varepsilon} -z;  \frac{t}{\eps^2} \big) \, dz
\\[3mm]
\displaystyle
 +\, \nabla \nabla u (x^\varepsilon,t)\!\cdot\!\!  \int\limits_{\mathbb R^d}\! \Big\{ \frac12 z\!\otimes\!z\! - z \!\otimes\!\varkappa_1 \big(\frac{x}{\varepsilon}\!-\!z,  \frac{t}{\eps^2} \big)
\\[3mm]
\displaystyle
+ \, \varkappa_2 \big( \frac{x}{\varepsilon}\! -\! z,  \frac{t}{\eps^2} \big) \!- \varkappa_2 \big(\frac{x}{\varepsilon},  \frac{t}{\eps^2}\big)  \Big\} \  a (z) \mu \big(\frac{x}{\varepsilon}, \frac{x}{\varepsilon}\! -\!z;  \frac{t}{\eps^2} \big) \, dz
\, +\, \ \phi^\varepsilon_2 (x,t)
\end{array}
\end{equation}
with the remainder
\begin{equation}\label{14}
\begin{array}{l}
\displaystyle
\phi^\varepsilon_2 (x,t)  =   \int\limits_{\mathbb R^d}
\bigg\{ \int\limits_0^{1} \big( \nabla \nabla u(x^\varepsilon-\varepsilon zq, t) - \nabla \nabla u(x^\varepsilon,t) \big) \!\cdot\! z\!\otimes\!z \,(1-q) \ dq
\\[4mm]   \displaystyle
+\, \varepsilon \varkappa_1 \big(\frac{x}{\varepsilon}\!-\!z, \frac{t}{\eps^2} \big)\!\cdot\! \int\limits_0^{1}\!  \nabla \nabla \nabla u(x^\varepsilon\!-\!\varepsilon zq, t) z\!\otimes\!z (1\!-\!q) \, dq  \,
\\[4mm]   \displaystyle
- \, \varepsilon \varkappa_2 \big(\frac{x}{\varepsilon}\!-\!z, \frac{t}{\eps^2} \big) \!\cdot\! \int\limits_0^{1}\!  \nabla \nabla \nabla u(x^\varepsilon\!-\!\varepsilon zq, t)z  \, dq\!  \bigg\} \, a (z) \mu \big( \frac{x}{\varepsilon}, \frac{x}{\varepsilon}\! -\!z; \frac{t}{\eps^2} \big) \, dz.
\end{array}
\end{equation}
We denote 
\begin{equation}\label{fiplus}
\phi^\varepsilon \ = \ \phi^\varepsilon_1 \ - \ \phi^\varepsilon_2.
\end{equation}

\begin{proposition}\label{fi_0} Let $u \in C^{1}\big( (0,T), {\cal{S}}(\mathbb R^d) \big)$,
and assume that  $ \mu(\cdot)$ satisfies estimates (\ref{lm-random}). Assume, moreover, that all the components of $\varkappa_1(\xi, s)$ and $\varkappa_2(\xi, s)$ are elements of $L^\infty((0,+\infty);L^2(\mathbb T^d))$.
Then for the functions $ \phi^\varepsilon_1$ and $ \phi^\varepsilon_2$ given by \eqref{Aw-t-r} and \eqref{14} we have
\begin{equation}\label{fi}
\| \phi^\varepsilon_1 \|\big._{\infty} \ \to \ 0 \quad \mbox{ and } \quad \| \phi^\varepsilon_2 \|\big._{\infty} \ \to \ 0 \quad  \mbox{ as } \; \varepsilon \to 0,
\end{equation}
where $\| \cdot \|\big._{\infty}$ is the norm in $ L^{\infty}\big( (0,T), L^2 (\mathbb R^d) \big)$.
\end{proposition}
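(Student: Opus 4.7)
The plan is to bound both remainders in $L^\infty((0,T); L^2(\mathbb R^d))$ by combining three ingredients: (a) the explicit prefactors $\varepsilon$ or $\varepsilon^2$ visible in \eqref{Aw-t-r} and \eqref{14}; (b) the uniform bounds $|\beta| \le C$ (boundedness of $\beta$) and $\mu_\omega \le \alpha_2$ from \eqref{lm-random}; and (c) the two-scale estimate $\|\psi(\cdot/\varepsilon) f\|_{L^2(\mathbb R^d)} \le C(f)\|\psi\|_{L^2(\mathbb T^d)}$ for $1$-periodic $\psi \in L^2(\mathbb T^d)$ and $f \in \mathcal{S}(\mathbb R^d)$, uniformly in $\varepsilon$. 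The latter follows by partitioning $\mathbb R^d$ into cubes of side $\varepsilon$ and majorizing $|f|^2$ on each cube by its supremum, which yields a Riemann sum that remains bounded as $\varepsilon \to 0$. In addition, the change of variables $x \mapsto x^\varepsilon = x - \varepsilon^{-1} G^\varepsilon(t)$ is a pure translation in $x$, so it preserves $L^2(\mathbb R^d)$-norms.

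For $\phi_1^\varepsilon$, each of the three summands in \eqref{Aw-t-r} carries an $\varepsilon$ or $\varepsilon^2$ prefactor, so combining (b), (c) and the hypothesis $\varkappa_j \in L^\infty((0,+\infty); L^2(\mathbb T^d))$ immediately gives $\|\phi_1^\varepsilon(\cdot, t)\|_{L^2(\mathbb R^d)} \le C\varepsilon$ uniformly in $t \in (0,T)$, yielding the first half of \eqref{fi}. The function $\phi_2^\varepsilon$ in \eqref{14} has three summands; the last two, involving $\varkappa_1$ and $\varkappa_2$, again carry an explicit $\varepsilon$, and after bounding $\mu$ by $\alpha_2$ and applying Minkowski's inequality to move the $L^2(dx)$-norm inside the $dz$-integral, they can be controlled by the same tools together with $\int |z|^2 a(z)\,dz < \infty$ from \eqref{M1}, again yielding an $O(\varepsilon)$ bound.

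The delicate summand, which I expect to be the main obstacle, is the first term of $\phi_2^\varepsilon$: the Taylor remainder $\int_0^1 (\nabla\nabla u(x^\varepsilon - \varepsilon z q, t) - \nabla\nabla u(x^\varepsilon, t)) \cdot z\otimes z\,(1-q)\,dq$ carries no explicit power of $\varepsilon$. Here I would bound $\mu$ by $\alpha_2$, apply Minkowski in $dz$ and $dq$, and observe that the inner $L^2(\mathbb R^d)$-norm in $x$ equals $\|\nabla\nabla u(\cdot - \varepsilon z q, t) - \nabla\nabla u(\cdot, t)\|_{L^2(\mathbb R^d)}$, which tends to $0$ pointwise in $z$ as $\varepsilon \to 0$ by continuity of translation in $L^2$, while being pointwise dominated by $2\|\nabla\nabla u(\cdot, t)\|_{L^2}$. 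Since $|z|^2 a(z) \in L^1(\mathbb R^d)$ by \eqref{M1}, dominated convergence yields the pointwise-in-$t$ convergence, and uniformity in $t \in (0,T)$ follows from the equicontinuity of the Schwartz-valued curve $t \mapsto \nabla\nabla u(\cdot, t)$ under $L^2$-translation. One might be tempted to Taylor-expand the difference once more to extract an explicit $\varepsilon$-gain, but this would cost an extra factor $|z|$ and therefore require $\int |z|^3 a(z)\,dz < \infty$, which is strictly stronger than \eqref{M1}; the dominated-convergence argument circumvents this at the price of a qualitative rather than quantitative estimate, which is exactly what the statement requires.
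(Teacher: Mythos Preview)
Your argument is correct and follows the same line as the paper's own proof, which is extremely terse: for $\phi_1^\varepsilon$ the paper simply says convergence ``immediately follows'' from the representation \eqref{Aw-t-r}, and for $\phi_2^\varepsilon$ it refers the reader to Proposition~5 of \cite{PZh}. Your write-up is essentially a self-contained reconstruction of that referenced argument---in particular, your treatment of the first summand in \eqref{14} via continuity of translation in $L^2$ and dominated convergence against $|z|^2 a(z)\,dz$, together with your remark that extracting an explicit $\varepsilon$ would illegitimately require a third moment of $a$, is exactly the point of that proof.
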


\begin{proof}
The desired convergence of $ \phi^\varepsilon_1$ immediately follows from representation \eqref{Aw-t-r} for this function.
The proof of the second relation in \eqref{fi}  is completely analogous to that of Proposition 5 in \cite{PZh}.
\end{proof}

\subsection{Terms of  order $\varepsilon^{-1}$}

Denote by $\xi=\frac{x}{\varepsilon}$ a "slow" variable on the period: $\xi \in \mathbb{T}^d$; then  $ \varkappa^i_1(\xi,s),  i = 1, \ldots, d,$ are functions on $(-\infty,+\infty)\times\mathbb T^d$.
Collecting all the terms of order $\varepsilon^{-1}$ in \eqref{Aw-t}, \eqref{ml_1} and equating them  to $0$, we get from \eqref{Aw} the following problem for the first corrector $\varkappa_1 (\xi, s), \xi  \in \mathbb{T}^d, s 
\in \mathbb{R}$: 
\begin{equation}\label{Fcorr}
 \frac{\partial \varkappa_1}{\partial s} (\xi, s )= \int\limits_{\mathbb R^d}  \Big( -z + \varkappa_1 (\xi-z, s) - \varkappa_1 (\xi, s) \Big)  a (z) \mu( \xi, \xi -z; s) \, dz + \beta(s),
 \end{equation}
written in the vector form; here the random field $\beta(s)=\beta_\omega(s)$ is unknown and should be determined as well. Since \eqref{Fcorr} is a system of uncoupled equations,
we can consider separately the equation for each component $\varkappa^i_1(\xi,s), \, i= 1, \ldots, d$, of the vector function $ \varkappa_1(\xi,s)$. It read 
\begin{equation}\label{Fcorr-1}
 \frac{\partial \varkappa^i_1}{\partial s} (\xi, s ) = A_\omega(s) \varkappa^i_1 (\xi, s) + f^i_\omega( \xi, s), \quad \varkappa_1 \in \big( L^2(\mathbb{T}^d)  \big)^d,
 \end{equation}
with
\begin{equation}\label{Acorr-1}
A_\omega(s) \varkappa^i_1 (\xi, s) = \int\limits_{\mathbb R^d}  \big( \varkappa^i_1 (\xi-z, s) - \varkappa^i_1 (\xi, s) \big)  a (z) \mu( \xi, \xi -z; s) \, dz,
 \end{equation}
\begin{equation}\label{fcorr-1}
f^i_\omega (\xi, s) =  - \int\limits_{\mathbb R^d}  z^i \, a (z) \, \mu( \xi, \xi -z; s) \, dz + \beta^i(s).
 \end{equation}


\subsection{A stationary solution of the auxiliary problem  \eqref{Fcorr-1}}

The main purpose of this section is to prove that there exists a uniquely defined stationary process $\beta(s)$ such that problem \eqref{Fcorr} has a stationary solution $\varkappa_1$. 
 As we noted above we can consider this problem for each component of the vector function $\varkappa_1$ separately.

We start with the study of the following Cauchy problems associated with the operators $H_\omega$ and $H_\omega^\ast$:
\begin{equation}\label{HC}
{\partial}_t u = A_\omega(t)u, \qquad u|_{t=0}= u_0(\xi) \in L^2(\mathbb{T}^d), \quad t\geqslant 0;
\end{equation}
\begin{equation}\label{HastC}
-\partial_t  p^N = A^\ast_\omega(t)p^N, \qquad p^N|_{t=N} = 1,  \quad t\leqslant N,
\end{equation}
where $N \in \mathbb{R}^+$, and we then let $N$ go to infinity. Observe that problem \eqref{HastC} is a parabolic problem with the reverse direction of time.

We formulate now several properties of solutions of the problems \eqref{HC} - \eqref{HastC} that will be exploited in the construction of the random corrector  $\varkappa_1 (\xi, t) $. All these properties hold realization-wise 
The symbols $u$ and $p^N$ stand for solutions of problems \eqref{HC} and \eqref{HastC}, respectively. 

\begin{lemma}\label{L1}
For each $t \in [0,N]$,
\begin{equation}\label{prop1}
\int\limits_{\mathbb{T}^d} u(\xi, t) p^N (\xi, t) d \xi = \int\limits_{\mathbb{T}^d} u_0(\xi )p^N(\xi, 0) d \xi =: C(N).
\end{equation}
\end{lemma}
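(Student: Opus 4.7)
The plan is to show that the pairing
$$I(t) := \int_{\mathbb{T}^d} u(\xi,t)\, p^N(\xi,t)\, d\xi$$
is constant on $[0,N]$, which immediately yields \eqref{prop1} by comparing $t$ and $t=0$. Conservation of $I(t)$ is the familiar duality between a forward Cauchy problem and its backward-in-time adjoint.

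First I would verify that $A_\omega(s)$ defined by \eqref{Acorr-1} is a bounded linear operator on $L^2(\mathbb{T}^d)$, uniformly in $s$ and $\omega$, using the bounds \eqref{lm-random} on $\mu_\omega$ and $\int a(z)\,dz = 1$: the convolution-type term is handled by Young's inequality (since $a \in L^1$) while the multiplication term is bounded by $\alpha_2$. Consequently both Cauchy problems \eqref{HC} and \eqref{HastC} are well-posed in $L^2(\mathbb{T}^d)$, and their solutions belong to $C^1([0,N]; L^2(\mathbb{T}^d))$, which makes $t \mapsto I(t)$ differentiable.

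The central computation is identifying the $L^2(\mathbb{T}^d)$-adjoint $A^\ast_\omega(s)$ of $A_\omega(s)$. For $v,w \in L^2(\mathbb{T}^d)$, applying Fubini to $\int_{\mathbb{T}^d} w(\xi)(A_\omega v)(\xi)\, d\xi$ and performing the change of variable $\eta = \xi - z$ inside the torus integral, while using the $1$-periodicity of $v$, $w$, and $\mu_\omega(\cdot,\cdot;s)$ in both spatial arguments, produces
$$(A^\ast_\omega(s) w)(\eta) = \int_{\mathbb{R}^d} a(z)\,\mu_\omega(\eta+z,\eta;s)\,w(\eta+z)\, dz \; - \; w(\eta) \int_{\mathbb{R}^d} a(z)\,\mu_\omega(\eta,\eta-z;s)\, dz,$$
so that $\langle A_\omega(s) v, w\rangle = \langle v, A^\ast_\omega(s) w\rangle$. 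With this identity in hand, differentiating and substituting \eqref{HC}, \eqref{HastC} gives
$$\frac{d}{dt} I(t) = \langle A_\omega(t) u(\cdot,t), p^N(\cdot,t)\rangle - \langle u(\cdot,t), A^\ast_\omega(t) p^N(\cdot,t)\rangle = 0,$$
and therefore $I(t) \equiv I(0)$ on $[0,N]$, which is \eqref{prop1}.

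The only step that requires any care is the adjoint computation, because the kernel $a(z)\mu_\omega(\xi,\xi-z;s)$ is non-symmetric and the $z$-integration is over all of $\mathbb{R}^d$ while $\xi$ lives on the torus. The translation $\xi \mapsto \xi - z$ by an arbitrary $z \in \mathbb{R}^d$ is legitimate on $\mathbb{T}^d$ precisely because of the joint periodicity of $\mu_\omega$ in its two spatial arguments, which is what makes the resulting integrand periodic in the new variable. Once that technical point is settled, the rest of the argument is a one-line differentiation.
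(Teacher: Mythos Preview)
Your proposal is correct and follows essentially the same approach as the paper: compute $\frac{d}{dt}\langle u(\cdot,t),p^N(\cdot,t)\rangle$ and observe that it vanishes because of the adjoint relation $\langle A_\omega(t)u,p^N\rangle=\langle u,A^\ast_\omega(t)p^N\rangle$. The paper's version is a one-line computation that takes the adjoint identity for granted, whereas you additionally justify the $L^2$-boundedness of $A_\omega(s)$ and write out the explicit form of $A^\ast_\omega(s)$; these extra details are correct and do no harm.
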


\begin{proof}
For any solution $u$ to problem \eqref{HC} and for any solution $p^N$ of \eqref{HastC} we have
$$
\int\limits_{t_1}^{t_2} \int\limits_{\mathbb{T}^d} \frac{\partial}{\partial s} (u p^N)(\xi, s) ds d\xi =
 \int\limits_{t_1}^{t_2} \big[(A_\omega(s) u, p^N) - (u, A^\ast_\omega (s)  p^N)  \big] ds = 0.
$$
Consequently, $(u (\cdot, t), p^N (\cdot, t)) = C(N)$ for all $t \in [0,N]$.
\end{proof}

\begin{remark}
Let us note that formula \eqref{prop1} remains valid if one replaces
the terminal  condition in \eqref{HastC} with an arbitrary terminal condition $p(N,\cdot)=\zeta(\cdot)\in L^2(\mathbb T^d)$.
 \end{remark}

Since $u(\xi,t)= 1$ is a solution to problem \eqref{HC} with $u_0=1$, we immediately obtain the following property of $p^N$:
\begin{lemma}\label{L2}
For each $t \in [0,N]$,
\begin{equation}\label{prop2}
\int\limits_{\mathbb{T}^d} p^N (\xi, t) d \xi = \int\limits_{\mathbb{T}^d} p^N(\xi, N) d \xi = 1.
\end{equation}
\end{lemma}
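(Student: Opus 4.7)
The plan is to apply Lemma \ref{L1} to a specific, trivially constructed solution of the forward Cauchy problem \eqref{HC}. The key observation is that the operator $A_\omega(s)$, as defined in \eqref{Acorr-1}, annihilates constants, since its kernel acts on the difference $u(\xi-z,s)-u(\xi,s)$. Therefore $u(\xi,t)\equiv 1$ solves \eqref{HC} with $u_0\equiv 1$.

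Having secured this trivial solution, I would substitute it into the duality identity \eqref{prop1} of Lemma \ref{L1}. This gives, for every $t\in[0,N]$,
\begin{equation*}
\int_{\mathbb{T}^d} p^N(\xi,t)\,d\xi \;=\; \int_{\mathbb{T}^d} p^N(\xi,0)\,d\xi \;=\; C(N).
\end{equation*}
Evaluating this at $t=N$ and using the terminal condition $p^N|_{t=N}=1$ together with $|\mathbb{T}^d|=1$ yields $C(N)=1$, and the claim follows.

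There is no serious obstacle here: the argument is a one-line consequence of the preceding lemma combined with the fact that $A_\omega(s)\mathbf{1}=0$. The only thing worth flagging for the reader is the implicit use of the preceding remark, namely that \eqref{prop1} is really an identity along the evolution, so the value of the pairing can be computed at the endpoint $t=N$ where $p^N$ is explicitly known, rather than at $t=0$ where it would require analyzing the backward Cauchy problem.
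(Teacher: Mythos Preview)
Your proof is correct and follows exactly the paper's approach: the paper simply notes that $u(\xi,t)\equiv 1$ solves \eqref{HC} with $u_0=1$ and then reads off \eqref{prop2} from Lemma~\ref{L1}. One small remark: your reference to ``the preceding remark'' is not quite apt, since that remark concerns changing the terminal condition in \eqref{HastC}; evaluating the constant pairing at $t=N$ is already permitted by the statement of Lemma~\ref{L1} itself.
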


\begin{lemma}\label{L2-bis}
\begin{equation}\label{prop2-bis}
p^N (\xi, t) >0 \qquad \forall \;\; N, \;  t < N, \; \xi \in \mathbb{T}^d.
\end{equation}
\end{lemma}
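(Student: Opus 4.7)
The strategy is to convert the backward Cauchy problem \eqref{HastC} into a forward problem, exhibit its generator as a nonnegative integral kernel minus a bounded multiplication operator, and then use a Duhamel (integrating-factor) representation to obtain a Volterra equation with nonnegative kernel whose free term is strictly positive.

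First I would set $q(\xi,\tau):=p^N(\xi,N-\tau)$ for $\tau\in[0,N]$; then $q$ satisfies the forward problem $\partial_\tau q=A^\ast_\omega(N-\tau)q$, $q(\xi,0)=1$. Using periodicity and the change of variables $\eta=\xi-z$ in the defining bilinear form of $A_\omega$, the adjoint is
\begin{equation*}
A^\ast_\omega(s)p(\xi)=\int_{\mathbb R^d}a(z)\mu(\xi+z,\xi;s)\,p(\xi+z)\,dz-\lambda(\xi,s)\,p(\xi),
\end{equation*}
with $\lambda(\xi,s)=\int_{\mathbb R^d}a(z)\mu(\xi,\xi-z;s)\,dz$. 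Assumptions \eqref{M1} and \eqref{lm-random} guarantee $\alpha_1\le\lambda(\xi,s)\le\alpha_2$ and a nonnegative kernel in the integral part.

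Next I would rewrite $\partial_\tau q+\lambda(\xi,N-\tau)q=(\mathcal{K}(N-\tau)q)(\xi)$ and apply the integrating factor $\exp\!\bigl(\int_0^\tau\lambda(\xi,N-s)\,ds\bigr)$. Integration on $[0,\tau]$ yields the Volterra-type representation
\begin{equation*}
q(\xi,\tau)=e^{-\int_0^\tau\lambda(\xi,N-s)\,ds}+\int_0^\tau e^{-\int_{s}^{\tau}\lambda(\xi,N-r)\,dr}\bigl(\mathcal{K}(N-s)q\bigr)(\xi,s)\,ds.
\end{equation*}
The free term is bounded below by $e^{-\alpha_2\tau}>0$, and the kernel $e^{-\int_s^\tau\lambda\,dr}a(z)\mu(\xi+z,\xi;N-s)$ is nonnegative. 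Solving by Picard iteration from $q_0(\xi,\tau):=e^{-\int_0^\tau\lambda(\xi,N-s)\,ds}$, every iterate satisfies $q_{n+1}\ge q_0>0$ inductively (since the integral term is nonnegative whenever $q_n\ge 0$). The iterates converge to the unique solution on any finite interval because $\mathcal{K}(s)$ is bounded in $L^\infty(\mathbb T^d)$ uniformly in $s$, giving $q(\xi,\tau)\ge e^{-\alpha_2\tau}>0$ and hence \eqref{prop2-bis}.

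The only delicate point is the passage from nonnegativity (which follows immediately from the positivity-preserving structure of the semigroup) to the quantitative strict positivity; this is handled automatically by the free term $e^{-\alpha_2\tau}$ in the Duhamel formula, so no separate strong maximum principle is needed. A minor bookkeeping issue is the derivation of $A^\ast_\omega$ in the periodic setting, which must use periodicity of both $u$ and $p$ to justify the translation $\eta=\xi-z$ on $\mathbb T^d$; once this is in place, the rest is a standard Volterra argument.
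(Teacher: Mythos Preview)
Your argument is correct. Both you and the paper begin by reversing time to a forward problem and decomposing $A^\ast_\omega$ into a nonnegative integral part and a bounded multiplication operator, but from there the routes diverge. The paper compares $\tilde p^N$ to the solution $R$ of an \emph{autonomous} problem $\partial_s R=\alpha_1\!\int_{\mathbb T^d}\hat a(\eta-\xi)R(\eta,s)\,d\eta-\alpha_2 R$ with constant coefficients, then invokes the Trotter formula to get $R\ge 0$ and a convolution-power lemma $\hat a^{\ast k}\ge\gamma_0$ from \cite{AA} to upgrade to strict positivity. Your Volterra/Duhamel representation bypasses both ingredients: the free term $e^{-\int_0^\tau\lambda}\ge e^{-\alpha_2\tau}$ already supplies strict positivity, and Picard iteration on the nonnegative kernel propagates it---no comparison to an autonomous problem, no Trotter, no external lemma, and you obtain the explicit quantitative bound $p^N(\xi,t)\ge e^{-\alpha_2(N-t)}$. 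Interestingly, your integrating-factor device is exactly what the paper later uses in its proof of the lower bound in Lemma~\ref{L3}, so in effect you have anticipated that technique and applied it where it works more cleanly.
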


The proof of this Lemma is provided in Appendix 1.

The following three statements are crucial for constructing the correctors and obtaining a priori estimates.
\begin{lemma}\label{L3}
For each $N \in \mathbb{R}$ and for all $t \in (- \infty, N]$ there exist positive deterministic constants $\pi_1$ and $\pi_2$ such that
\begin{equation}\label{prop3}
0< \pi_1 \le p^N (\xi, t) \le \pi_2.
\end{equation}
\end{lemma}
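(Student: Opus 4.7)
The plan is to convert \eqref{HastC} into a forward Cauchy problem via the time reversal $q(\xi,s):=p^{N}(\xi,N-s)$, which satisfies $\partial_s q=A^{\ast}_\omega(N-s)q$ with $q(\xi,0)\equiv 1$, and then to establish uniform two-sided bounds on $q$ depending only on $\alpha_1,\alpha_2$, and $a$. Two ingredients will drive the argument: the mass conservation $\int_{\mathbb T^d}q(\xi,s)\,d\xi=1$ from Lemma~\ref{L2}, and the decomposition $A^{\ast}_\omega(t)\varphi=B^{\ast}_\omega(t)\varphi-c_\omega(\cdot,t)\varphi$ with $B^{\ast}_\omega(t)\varphi(\xi)=\int_{\mathbb R^d}a(y-\xi)\mu_\omega(y,\xi;t)\varphi(y)\,dy$ a positive integral operator and multiplier $c_\omega(\xi,t)=\int_{\mathbb R^d}a(z)\mu_\omega(\xi,\xi-z;t)\,dz\in[\alpha_1,\alpha_2]$.

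First I would expand $q$ by Duhamel's formula into a Volterra series of nonnegative terms, the $n$-th being an iterated time-ordered application of $B^{\ast}_\omega$ to the constant datum $1$, weighted by the positive factor $\exp(-\int c_\omega)\in[e^{-\alpha_2 s},e^{-\alpha_1 s}]$. The sandwich $\alpha_1\le\mu_\omega\le\alpha_2$ pinches each term between $\alpha_1^n$ and $\alpha_2^n$ times the $n$-fold convolution on $\mathbb T^d$ of the periodization $\hat a_{\mathrm{per}}(\xi)=\sum_{k\in\mathbb Z^d}a(\xi+k)$, which has unit mass on $\mathbb T^d$.

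The main obstacle is producing a uniform strictly positive lower bound on this iterated convolution, i.e., a D\"oblin-type minorization stating that for some $n_0\in\mathbb N$ and $\delta_0>0$, depending only on $a$, $\hat a_{\mathrm{per}}^{\ast n_0}(\xi)\ge\delta_0$ for every $\xi\in\mathbb T^d$. I would establish it through a (local) CLT on the torus, using that $a$ is absolutely continuous with finite second moment by \eqref{M1}, so that $\hat a_{\mathrm{per}}^{\ast n}$ becomes uniformly close to $1$ for $n$ large. Substituting this minorant into the Volterra expansion yields $q(\xi,s)\ge\pi_1$ as soon as $s$ exceeds a deterministic threshold $s_0$; for $s\in[0,s_0]$ the zeroth-order Duhamel term $\exp(-\int_0^s c_\omega)\ge e^{-\alpha_2 s_0}$ already furnishes a lower bound of the same form.

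The upper bound would follow by combining the lower bound with the mass constraint $\int q\,d\xi=1$ through a Harnack-type ratio inequality $\sup_\xi q(\xi,s)\le C\inf_\xi q(\xi,s)$, where $C=C(\alpha_1,\alpha_2,a)$ is obtained from the analogous iteration run with the upper estimate $\mu_\omega\le\alpha_2$. Since mass conservation forces $\inf_\xi q\le 1$, this gives $\sup_\xi q\le C=:\pi_2$. Both $\pi_1$ and $\pi_2$ depend only on $\alpha_1,\alpha_2,a$, hence are uniform in $\omega\in\Omega$, $N\in\mathbb R$, and $t\le N$, which is exactly what Lemma~\ref{L3} asserts.
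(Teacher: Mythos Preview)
Your lower-bound strategy is close to the paper's, but as written it does not deliver a uniform $\pi_1$. Expanding $q$ by Duhamel over the full interval $[0,s]$ and bounding each term from below yields only $q(\xi,s)\ge e^{-(\alpha_2-\alpha_1)s}$: once the iterated kernels act on the constant datum $1$ the convolutions $\hat a^{\ast n}\!*1$ collapse to $1$, the minorization $\hat a^{\ast n_0}\ge\delta_0$ never enters, and the surviving factor $e^{-\alpha_2 s}\sum_n(\alpha_1 s)^n/n!$ decays. The paper repairs exactly this by running the comparison only over a \emph{unit} window $[t-1,t]$. There the datum $\tilde p^N(\cdot,t-1)$ is no longer constant but still has unit mass by Lemma~\ref{L2}, and the $k_0$-th Dyson term applied to it is bounded below by $c\,\hat a^{\ast k_0}\!*\tilde p^N(\cdot,t-1)\ge c\gamma_0\int_{\mathbb T^d}\tilde p^N(\cdot,t-1)=c\gamma_0$, invoking the minorization $\hat a^{\ast k_0}\ge\gamma_0$ from \cite[Lemma~4.2]{AA}. (Incidentally, a local CLT requires more than absolute continuity and a second moment, so citing that lemma is safer than re-deriving it.)

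The upper bound has a more serious gap. The Harnack ratio $\sup_\xi q\le C\inf_\xi q$ does not follow from ``the analogous iteration run with $\mu_\omega\le\alpha_2$'': that comparison yields only $q(\xi,s)\le e^{(\alpha_2-\alpha_1)s}$, which diverges, and combining it with your lower bound gives no uniform ratio. A one-step Harnack over $[t-1,t]$ also fails, because the propagator contains the singular piece $e^{-\int c}\,\mathrm{Id}$, and its absolutely continuous part need not be bounded above under the sole hypothesis $a\in L^1$; hence you cannot control $q(\cdot,t)$ in $L^\infty$ without already controlling $q(\cdot,t-1)$ in $L^\infty$. The paper's argument is entirely different: setting $M_T=\max_{[0,T]}\|\tilde p^N(\cdot,t)\|_{L^\infty}$, one first records the Lipschitz bound $\|\tilde p^N(\cdot,t)-\tilde p^N(\cdot,s)\|_{L^\infty}\le 2\alpha_2 M_T|t-s|$ and then integrates the equation over a short backward interval of length $1/(5\alpha_2)$ ending at a near-maximum point. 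Mass conservation and Chebyshev give $|\{\tilde p^N\ge\sqrt{M_T}\}|\le M_T^{-1/2}$, which together with integrability of $\hat a$ forces the gain term $\int\hat a\,\mu\,\tilde p^N$ to be $o(M_T)$; the loss term $-G\,\tilde p^N$, however, contributes at least $\tfrac12\alpha_1 M_T$ near the maximum. Balancing produces an inequality that is contradictory for large $M_T$, yielding the deterministic ceiling $\pi_2$.
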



\begin{lemma}\label{L4}
There exist positive deterministic constants $\gamma_0$ and $C_4$ such that
\begin{equation}\label{prop4}
\| u (\cdot, s) - C(N)\|_{L^2(\mathbb{T}^d)} \le C_4 \, e^{-\gamma_0 s} \, \| u_0\|_{L^2(\mathbb{T}^d)},
\end{equation}
for all $s \in (0, N]$, where the constant $C(N)$ is defined by \eqref{prop1}.
In particular, if $s=N$, then
\begin{equation}\label{prop4-1}
\| u (\cdot, N) - C(N)\|_{L^2(\mathbb{T}^d)} \le C_4 \, e^{-\gamma_0 N} \, \| u_0\|_{L^2(\mathbb{T}^d)}.
\end{equation}
\end{lemma}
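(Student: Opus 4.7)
The plan is to introduce the centred function $v(\xi,s):=u(\xi,s)-C(N)$ and to monitor its weighted $L^2$-norm
\[
\Phi(s):=\int_{\mathbb{T}^d} v^2(\xi,s)\,p^N(\xi,s)\,d\xi
\]
against the backward solution $p^N$. Since $A_\omega(s)\mathbf{1}=0$, the function $v$ satisfies the same forward equation $\partial_s v=A_\omega(s)v$ as $u$ with initial datum $u_0-C(N)$. Applying Lemma~\ref{L1} to the pair $(v,p^N)$ and using \eqref{prop2}, one obtains the orthogonality $\int_{\mathbb{T}^d}v(\xi,s)\,p^N(\xi,s)\,d\xi=0$ for every $s\in[0,N]$. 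By Lemma~\ref{L3}, $\pi_1\|v(\cdot,s)\|_{L^2}^2\le \Phi(s)\le \pi_2\|v(\cdot,s)\|_{L^2}^2$, so exponential decay of $\Phi$ suffices.

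Next I would differentiate $\Phi$ and combine the two evolution equations. A direct symmetrisation, relabelling $\xi\leftrightarrow\eta$ in the double integrals that arise from $2\int v(A_\omega v) p^N\,d\xi$ and from $-\int v^2 A_\omega^{\ast} p^N\,d\xi$, yields the Dirichlet-form identity
\[
\Phi'(s) = -\int_{\mathbb{T}^d}\!\int_{\mathbb{T}^d} p^N(\xi,s)\,\tilde a(\xi-\eta)\,\mu_\omega(\xi,\eta;s)\,\bigl(v(\xi,s)-v(\eta,s)\bigr)^2\,d\eta\,d\xi,
\]
where $\tilde a$ is the periodisation of $a$. Using $p^N\ge\pi_1$ (Lemma~\ref{L3}) and $\mu_\omega\ge\alpha_1$ (assumption \eqref{lm-random}) the right-hand side is bounded above by $-\pi_1\alpha_1\,\mathcal E[v(\cdot,s)]$, where $\mathcal E[w]:=\int\!\int\tilde a(\xi-\eta)(w(\xi)-w(\eta))^2\,d\eta\,d\xi$.

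The conclusion then follows from a deterministic spectral-gap inequality on $\mathbb{T}^d$: there exists $\lambda_0>0$ such that $\mathcal E[w]\ge \lambda_0\|w-\langle w\rangle\|_{L^2}^2$ for every $w\in L^2(\mathbb{T}^d)$, where $\langle w\rangle$ denotes the Lebesgue mean. The zero-$p^N$-mean condition, combined with the bounds $\pi_1\le p^N\le\pi_2$, gives $\|v\|_{L^2}^2\le (1+\pi_2^2)\|v-\langle v\rangle\|_{L^2}^2$; together with the preceding inequality this produces $\Phi'(s)\le -2\gamma_0\,\Phi(s)$ for a deterministic $\gamma_0>0$. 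Gr\"onwall's lemma, the norm equivalence from Lemma~\ref{L3}, and the trivial bound $\|v(\cdot,0)\|_{L^2}\le(1+\pi_2)\|u_0\|_{L^2}$ (since $|C(N)|\le\pi_2\|u_0\|_{L^1}\le\pi_2\|u_0\|_{L^2}$) yield the claim.

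The main obstacle is the jump-type Poincar\'e inequality for $\mathcal E$. By Fourier analysis on $\mathbb{T}^d$ it reduces to $\inf_{k\in\mathbb{Z}^d\setminus\{0\}}\bigl(1-\mathrm{Re}\,\widehat{\tilde a}(k)\bigr)>0$; equivalently, the support of $a$ must not be contained in a countable union of hyperplanes $\{z:k\cdot z\in\mathbb{Z}\}$. This is a structural non-degeneracy requirement on $a$ beyond \eqref{M1} alone, natural in the convolution-type setting of \cite{AA}; it should either be read off an implicit hypothesis of the paper or stated explicitly before the spectral gap is invoked.
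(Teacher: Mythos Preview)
Your proposal is correct and follows essentially the same route as the paper: monitor the $p^N$-weighted $L^2$ energy of $u-C(N)$, derive the Dirichlet-form identity for its derivative by combining the forward equation for $u$ with the backward equation for $p^N$, bound the Dirichlet form below via a spectral gap for the symmetrised convolution operator on $\mathbb{T}^d$, convert the Lebesgue-mean gap into a $p^N$-mean gap, and conclude by Gr\"onwall. The paper obtains the spectral gap by observing that the symmetrised operator $\mathcal A^{sym}$ is (compact)$-I$ with $0$ a simple eigenvalue by Krein--Rutman, and it handles the mean comparison via a separate lemma giving $\|v-\langle v\rangle\|\ge C_3\|v\|$; your argument for the latter, using $\langle v\rangle=-\int(v-\langle v\rangle)p^N$ and $\|p^N\|_{L^2}\le\pi_2$, is in fact a bit cleaner. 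Your caveat about the non-degeneracy of $a$ needed for the Poincar\'e inequality is well taken and is implicitly in force in the paper through the standing hypotheses inherited from \cite{AA} (cf.\ the use of $\hat a^{*k}\ge\gamma_0$ in the proofs of Lemmata~\ref{L2-bis} and~\ref{L3}).
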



\begin{lemma}\label{L4-add}
There exists a limit
\begin{equation}\label{prop4-add}
\lim\limits_{N \to \infty} C(N) = C_{\infty}, \quad C_{\infty} = \lim\limits_{N \to \infty} \int\limits_{\mathbb{T}^d} u(\xi, N) d \xi,
\end{equation}
and
\begin{equation}\label{prop4-add1}
\| u (\cdot, N) - C_{\infty}\|_{L^2(\mathbb{T}^d)} \le C_5 \, e^{-\gamma_0 N} \, \| u_0\|_{L^2(\mathbb{T}^d)},
\end{equation}
where $C_5$ is a deterministic constant independent of $N$.
\end{lemma}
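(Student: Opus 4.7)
The plan is to show that $C(N)$ is Cauchy by exploiting the duality relation of Lemma \ref{L1} together with the exponential decay from Lemma \ref{L4} and the $L^\infty$ bound from Lemma \ref{L3}. The key preliminary observation is that, taking $t=N$ in the identity $C(N)=\int_{\mathbb T^d} u(\xi,t)p^N(\xi,t)d\xi$ and using $p^N(\cdot,N)\equiv1$, we obtain the simple representation
$$
C(N)=\int_{\mathbb T^d} u(\xi,N)\,d\xi.
$$
For $M\ge N$, the restriction $p^M|_{t\le N}$ solves \eqref{HastC} with terminal datum $\zeta=p^M(\cdot,N)$ at $t=N$, so the extended form of Lemma \ref{L1} (the remark after it) yields
$$
C(M)=\int_{\mathbb T^d} u_0(\xi)p^M(\xi,0)\,d\xi=\int_{\mathbb T^d} u(\xi,N)p^M(\xi,N)\,d\xi.
$$

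Subtracting the two representations and then using Lemma \ref{L2}, which guarantees $\int_{\mathbb T^d}[p^M(\xi,N)-1]\,d\xi=0$, I would rewrite
$$
C(M)-C(N)=\int_{\mathbb T^d}\bigl[u(\xi,N)-C(N)\bigr]\bigl[p^M(\xi,N)-1\bigr]\,d\xi,
$$
and estimate the right-hand side by Cauchy--Schwarz. The first factor is controlled by Lemma \ref{L4} with $s=N$, giving a factor $C_4 e^{-\gamma_0 N}\|u_0\|_{L^2}$, while the second factor is bounded independently of $M$ and $N$ by $\pi_2+1$ thanks to Lemma \ref{L3} and $|\mathbb T^d|=1$. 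This proves
$$
|C(M)-C(N)|\le C_4(\pi_2+1)\,e^{-\gamma_0 N}\|u_0\|_{L^2(\mathbb T^d)},
$$
so $\{C(N)\}$ is Cauchy and has a limit $C_\infty$. Letting $M\to\infty$ in the same inequality gives $|C(N)-C_\infty|\le C_4(\pi_2+1)e^{-\gamma_0 N}\|u_0\|_{L^2}$.

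The estimate \eqref{prop4-add1} then follows from the triangle inequality
$$
\|u(\cdot,N)-C_\infty\|_{L^2(\mathbb T^d)}\le \|u(\cdot,N)-C(N)\|_{L^2(\mathbb T^d)}+|C(N)-C_\infty|,
$$
applying Lemma \ref{L4} to the first term and the above bound to the second, with $C_5:=C_4(\pi_2+2)$. The only step that is not a direct quotation of a preceding lemma is the identification $C(M)=\int u(\xi,N)p^M(\xi,N)\,d\xi$; I expect this to be the main (though minor) technical point, since it rests on correctly invoking the remark after Lemma \ref{L1} with terminal datum $p^M(\cdot,N)$ and on the fact that $p^M$ restricted to $[0,N]$ coincides with the backward solution determined by this datum, which follows from uniqueness for \eqref{HastC}.
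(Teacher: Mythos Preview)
Your proof is correct. The paper's own argument is slightly more direct: instead of invoking the duality identity and Cauchy--Schwarz, it simply applies Lemma~\ref{L4} \emph{twice} at $s=N$ --- once with terminal time $N$ and once with terminal time $N+K$ --- and uses the triangle inequality in $L^2(\mathbb T^d)$ (recalling $|\mathbb T^d|=1$, so $|C(N)-C(N+K)|=\|C(N)-C(N+K)\|_{L^2}$) to get
\[
|C(N)-C(N+K)|\le \|u(\cdot,N)-C(N)\|_{L^2}+\|u(\cdot,N)-C(N+K)\|_{L^2}\le 2C_4 e^{-\gamma_0 N}\|u_0\|_{L^2}.
\]
Your route via the pairing $C(M)=\int u(\cdot,N)p^M(\cdot,N)$ and Cauchy--Schwarz works just as well and makes the role of $p^M$ explicit; the price is that you additionally need the $L^\infty$ bound of Lemma~\ref{L3} and the remark after Lemma~\ref{L1}, whereas the paper needs only Lemma~\ref{L4}. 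Both approaches give the same exponential rate and differ only in the numerical value of $C_5$.
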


The proof of Lemmata \ref{L3}--\ref{L4-add} is given in Appendix 2.

\medskip
We conclude that a solution of problem \eqref{HC} converges at the exponential rate to a constant, and the
convergence is uniform for $u_0$ from the unit ball in $L^2(\mathbb T^d)$.  \\
Observe that the constants $C(N)$ and $C_\infty$ are random.

\begin{lemma}\label{L4-bis}
For any $K>0$ and $s<N$,
\begin{equation}\label{prop4-bis}
\| p^N (\cdot, s) - p^{N+K}(\cdot, s)\|_{L^2(\mathbb{T}^d)} \le C_6 \, e^{-\gamma_0 (N-s)},
\end{equation}
where $C_6$ is a deterministic constant independent of $N$.
\end{lemma}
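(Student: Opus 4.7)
The plan is to prove this by duality: represent the difference $p^N(\cdot,s)-p^{N+K}(\cdot,s)$ via a test function paired with forward solutions, and then exploit the exponential stabilization to a constant established in Lemma \ref{L4-add}.

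Concretely, I would fix $s<N$ and any $\zeta\in L^2(\mathbb{T}^d)$ with $\|\zeta\|_{L^2(\mathbb{T}^d)}=1$. Let $u_\zeta(\xi,t)$ denote the solution of the forward Cauchy problem $\partial_t u=A_\omega(t)u$ on $[s,\infty)$ with $u_\zeta(\cdot,s)=\zeta$. Applying the conservation identity of Lemma \ref{L1}, together with the remark permitting arbitrary terminal data, on the interval $[s,N]$ and using $p^N(\cdot,N)\equiv 1$ yields
\begin{equation*}
\int_{\mathbb{T}^d}\zeta(\xi)\,p^N(\xi,s)\,d\xi \;=\; \int_{\mathbb{T}^d}u_\zeta(\xi,N)\,d\xi,
\end{equation*}
and the analogous identity on $[s,N+K]$ gives
\begin{equation*}
\int_{\mathbb{T}^d}\zeta(\xi)\,p^{N+K}(\xi,s)\,d\xi \;=\; \int_{\mathbb{T}^d}u_\zeta(\xi,N+K)\,d\xi.
\end{equation*}
Subtracting and taking the supremum over $\zeta$ in the unit ball of $L^2(\mathbb{T}^d)$ reduces the problem to bounding $\big|\int_{\mathbb{T}^d}(u_\zeta(\xi,N)-u_\zeta(\xi,N+K))\,d\xi\big|$.

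For this last step I would invoke Lemma \ref{L4-add}, suitably time-shifted: starting the forward flow at time $s$ instead of $0$ (equivalently, replacing $\omega$ by $\tau_s\omega$), there exists a random constant $C_\infty=C_\infty(\zeta,s,\omega)$ such that $\|u_\zeta(\cdot,t)-C_\infty\|_{L^2(\mathbb{T}^d)}\le C_5 e^{-\gamma_0(t-s)}\|\zeta\|_{L^2(\mathbb{T}^d)}$ for all $t\ge s$. Since $\mathbb{T}^d$ has unit volume, integration is controlled by the $L^2$ norm, so
\begin{equation*}
\Big|\int_{\mathbb{T}^d}u_\zeta(\xi,N)\,d\xi - C_\infty\Big|\le C_5 e^{-\gamma_0(N-s)},\quad \Big|\int_{\mathbb{T}^d}u_\zeta(\xi,N+K)\,d\xi - C_\infty\Big|\le C_5 e^{-\gamma_0(N+K-s)}.
\end{equation*}
A triangle inequality then gives $2C_5 e^{-\gamma_0(N-s)}$ as the bound, and taking $C_6:=2C_5$ completes the argument.

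The only delicate point—not a true obstacle, but the step I would write out most carefully—is justifying that the constants $\gamma_0$ and $C_5$ in the exponential estimate of Lemma \ref{L4-add} are the same when the forward problem is initialized at time $s$ rather than at $0$. This follows from the stationarity of $\mu_\omega$ in $t$: the time-shifted problem corresponds to the forward problem associated with the shifted realization $\tau_s\omega$, and since the constants in Lemma \ref{L4-add} are deterministic, the estimate passes uniformly to all starting times. Once this is noted, the argument is short and purely duality-based.
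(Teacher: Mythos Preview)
Your proposal is correct and follows essentially the same duality strategy as the paper: pair $p^N-p^{N+K}$ against a forward solution, use the conservation identity of Lemma~\ref{L1} to transport the pairing to the terminal time, and then invoke the exponential stabilization of Lemma~\ref{L4-add}. The only cosmetic difference is that the paper sets $q^N=p^N-p^{N+K}$, transports the pairing to time $N$ only, and exploits $\int_{\mathbb{T}^d}q^N(\cdot,N)\,d\xi=0$ to subtract $C_\infty$, whereas you transport each term separately to its own terminal time $N$ and $N+K$; the resulting bounds and constants are the same.
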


The proof of Lemma \ref{L4-bis} is given in Appendix 2.

\medskip

Next we show that the equation $-\partial_t  p = A^\ast_\omega(s)p$ has a stationary solution, this solution is unique up to a multiplicative constant.

\begin{proposition}\label{uniq}
There exists a limit
\begin{equation}\label{uniq-1}
p_\infty(\xi, s) = \lim\limits_{N \to \infty} p^N(\xi, s), \quad \xi \in \mathbb{T}^d, \; s \in \mathbb{R}.
\end{equation}
The function $p_\infty(\xi,s)$ is a stationary solution of the equation $-\partial_t  p = A^\ast_\omega(s)p$.
Moreover,
\begin{equation}\label{uniq-2}
\int\limits_{\mathbb{T}^d} p_\infty (\xi, s) d \xi =1 \;\; \forall s \in \mathbb{R}; \qquad
0< \pi_1 \le p_\infty (\xi, s) \le \pi_2,
\end{equation}
$p_\infty (\xi,s) \in C(\mathbb{R}, \, L^2(\mathbb{T}^d))$. Under the first condition in \eqref{uniq-2} 
the stationary solution is unique.
\end{proposition}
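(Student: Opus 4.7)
My plan is to build $p_\infty$ as an $L^2$-limit of $p^N$, inherit its pointwise bounds and mass from the previous lemmas, pass to the limit in the equation, and close with a stationarity argument based on time-shift covariance and a duality-based uniqueness proof.

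First I would establish existence of the limit. Lemma \ref{L4-bis} says that $\{p^N(\cdot,s)\}_{N>s}$ is Cauchy in $L^2(\mathbb{T}^d)$ with an exponentially small remainder, so for each fixed $s$ the limit $p_\infty(\cdot,s):=\lim_N p^N(\cdot,s)$ exists in $L^2(\mathbb{T}^d)$. The bound in Lemma \ref{L4-bis} is uniform in $s$ on any compact subinterval $[s_0,s_1]$ once $N>s_1$, so the convergence is uniform on compacts; combined with the continuity in $t$ of each $p^N$ (which follows from it being a solution of an ODE in $L^2(\mathbb{T}^d)$ with a bounded generator), this yields $p_\infty\in C(\mathbb{R},L^2(\mathbb{T}^d))$. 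Passing to the limit in Lemmas \ref{L2} and \ref{L3} gives $\int_{\mathbb{T}^d}p_\infty(\xi,s)d\xi=1$ and $\pi_1\le p_\infty\le\pi_2$ for every $s$.

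Next I would verify the equation. Since $A_\omega^\ast(s)$ is a bounded operator on $L^2(\mathbb{T}^d)$ (an integral operator with an $L^\infty$ kernel times an $L^1$ function $a$), the integrated form $p^N(\cdot,s)=p^N(\cdot,s')-\int_s^{s'}A_\omega^\ast(r)p^N(\cdot,r)dr$ passes to the limit term by term, so $p_\infty$ solves $-\partial_t p=A_\omega^\ast(t)p$ on $\mathbb{R}$. For stationarity, observe that a direct change of variable $r=s-t$ in the problem \eqref{HastC} shows $p^N(\xi,t;\omega)=p^{N-t}(\xi,0;\tau_t\omega)$, because the operator $A_\omega(r+t)$ coincides with $A_{\tau_t\omega}(r)$ by the stationarity of $\mu_\omega$. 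Letting $N\to\infty$ yields $p_\infty(\xi,t;\omega)=p_\infty(\xi,0;\tau_t\omega)$, which is precisely stationarity of the process $s\mapsto p_\infty(\cdot,s)$.

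The delicate step, and the one I expect to be the main obstacle, is uniqueness. Suppose $q(\xi,s)$ is another bounded stationary solution of $-\partial_t p=A_\omega^\ast(t)p$ with $\int_{\mathbb{T}^d}q(\xi,s)d\xi=1$. For any $u_0\in L^2(\mathbb{T}^d)$ let $u(\cdot,t)$ solve \eqref{HC}; the same integration by parts used in Lemma \ref{L1} gives $\int_{\mathbb{T}^d}u(\xi,t)q(\xi,t)d\xi=\int_{\mathbb{T}^d}u_0(\xi)q(\xi,0)d\xi$ for all $t\ge0$. By Lemma \ref{L4-add}, $u(\cdot,t)\to C_\infty$ in $L^2(\mathbb{T}^d)$, and since $q$ is uniformly bounded with $\int q=1$, passing to the limit gives $\int_{\mathbb{T}^d}u_0(\xi)q(\xi,0)d\xi=C_\infty$. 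The identical argument applied to $p_\infty$ (using $\int u(\cdot,N)p^N(\cdot,N)d\xi=\int u(\cdot,N)d\xi\to C_\infty$ and $\int u_0 p^N(\cdot,0)d\xi\to\int u_0 p_\infty(\cdot,0)d\xi$) yields $\int_{\mathbb{T}^d}u_0(\xi)p_\infty(\xi,0)d\xi=C_\infty$. Subtracting and using that $u_0$ is arbitrary in the dense space $L^2(\mathbb{T}^d)$, we conclude $q(\cdot,0)=p_\infty(\cdot,0)$, whence by stationarity $q\equiv p_\infty$. The main technical care is ensuring that the duality identity extends to $t=N$ and to the limit $N\to\infty$, and that the constant $C_\infty$ defined in Lemma \ref{L4-add} depends on $u_0$ linearly, which is immediate from linearity of the Cauchy problem \eqref{HC}.
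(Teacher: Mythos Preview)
Your construction of $p_\infty$ --- the Cauchy argument from Lemma~\ref{L4-bis}, inheriting the bounds and mass from Lemmas~\ref{L2}--\ref{L3}, passing to the limit in the integrated equation, and the shift identity $p^N(\cdot,t;\omega)=p^{N-t}(\cdot,0;\tau_t\omega)$ for stationarity --- is exactly what the paper does.

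The uniqueness argument, however, is genuinely different. The paper proceeds as follows: for a competing stationary solution $q$ one first reduces to the zero-mean case $\int_{\mathbb T^d}q\,d\xi=0$, then reruns the duality estimate behind Lemma~\ref{L4-bis} to obtain $\|q(\cdot,0,\omega)\|\le C e^{-\gamma_0 N}\|q(\cdot,N,\omega)\|$; taking expectations and using that $\mathbb E\|q(\cdot,0)\|=\mathbb E\|q(\cdot,N)\|$ by stationarity forces $q\equiv 0$. Your route instead identifies both $p_\infty(\cdot,0)$ and $q(\cdot,0)$ with the Riesz representer of the linear functional $u_0\mapsto C_\infty(u_0)$ via the conservation law $(u(\cdot,t),q(\cdot,t))=\mathrm{const}$ and Lemma~\ref{L4-add}. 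This is a clean, realization-wise argument that avoids taking expectations altogether; the price is that you need $\|q(\cdot,t;\omega)\|_{L^2}$ to stay bounded (or at least subexponential) in $t$ for the limit $\int u(\cdot,t)q(\cdot,t)\,d\xi\to C_\infty$ to go through, which you correctly flag by restricting to ``bounded'' stationary competitors. The paper's version trades that hypothesis for the milder $\mathbb E\|q\|<\infty$ but must invoke the probabilistic structure. Both are valid; yours is arguably more transparent once the boundedness is granted.
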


The proof of this statement is also given in Appendix 2.

\medskip

Now we turn to problem \eqref{Fcorr-1}--\eqref{Acorr-1} and 
formulate conditions that ensure the existence of its stationary solution. 
Given a process $g_\omega(\xi,s)$ with values in $L^2(\mathbb T^d)$
consider the equation
\begin{equation}\label{Du-1}
 \frac{\partial v}{\partial s} (\xi, s ) = A(s) v (\xi, s) + g_\omega( \xi, s),
 \quad (\xi, s) \in \mathbb{T}^d \times \mathbb{R},
 \end{equation}
 with 
\begin{equation}\label{Du-2}
A(s) v (\xi, s) = \int\limits_{\mathbb R^d}  \big( v (\xi-z, s) - v (\xi, s) \big)  a (z) \mu( \xi, \xi -z; s) \, dz.
\end{equation} 
\begin{lemma}\label{Du}
Let  $g_\omega( \xi, s)$ be a stationary process such that a.s.  $g_\omega( \xi, s)
\in L^\infty((-\infty,+\infty); L^2(\mathbb T^d))$, and assume that the process $\big(g_\omega( \xi, s), 
p_\infty(\xi,s)\big)$ is also stationary, where $p_\infty(\xi, s)$ is the stationary solution of the equation $-\partial_t  p = A^\ast(s)p$ defined in \eqref{uniq-1}. 
If a.s.
\begin{equation}\label{Du-3}
\int\limits_{\mathbb T^d}  g_\omega( \xi, s) \, p_\infty(\xi, s) \, d\xi = 0  \quad \forall \ s \in \mathbb{R},
 \end{equation}
then equation \eqref{Du-1} has 
a stationary solution $v^\omega (\xi,s) \in  L^\infty((-\infty,+\infty);L^2(\mathbb{T}^d))$, which is unique up to an additive constant.
\end{lemma}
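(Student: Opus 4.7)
The plan is to construct $v^\omega$ via a backward Duhamel formula exploiting exponential contraction of the homogeneous semigroup to the $p_\infty$-mean. Let $U_\omega(s,r): L^2(\mathbb T^d) \to L^2(\mathbb T^d)$, $s\ge r$, denote the evolution operator for $\partial_s u = A(s)u$. The starting point is the pointwise estimate
$$
\bigl\| U_\omega(s,r)\, h - \langle h, p_\infty(\cdot,r)\rangle \bigr\|_{L^2(\mathbb T^d)} \;\le\; C\, e^{-\gamma_0(s-r)}\, \|h\|_{L^2(\mathbb T^d)},
$$
which I would derive by combining Lemma \ref{L1} (together with the subsequent remark extending the conservation law to arbitrary terminal data) applied with $p = p_\infty$, the convergence of $u(\cdot,s)$ to a constant at rate $e^{-\gamma_0 s}$ from Lemma \ref{L4-add}, and the normalization $\int_{\mathbb T^d} p_\infty(\xi,r)\,d\xi = 1$ from Proposition \ref{uniq}. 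This duality identifies the limit constant as $\langle h, p_\infty(\cdot,r)\rangle$.

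Applying the estimate to $h = g_\omega(\cdot,r)$, the solvability condition \eqref{Du-3} annihilates the constant term, so
$$
\|U_\omega(s,r)\, g_\omega(\cdot,r)\|_{L^2(\mathbb T^d)} \;\le\; C\, e^{-\gamma_0(s-r)}\, \|g_\omega\|_{L^\infty(\mathbb R; L^2(\mathbb T^d))}.
$$
This makes the backward integral
$$
v^\omega(\xi,s) \;:=\; \int_{-\infty}^s \bigl[U_\omega(s,r)\, g_\omega(\cdot,r)\bigr](\xi)\, dr
$$
absolutely convergent in $L^2(\mathbb T^d)$, with the uniform almost sure bound $\|v^\omega(\cdot,s)\|_{L^2(\mathbb T^d)} \le C \gamma_0^{-1} \|g_\omega\|_{L^\infty(\mathbb R; L^2(\mathbb T^d))}$. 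A direct calculation using $U_\omega(s,r) = U_\omega(s,q) U_\omega(q,r)$ and differentiating the Bochner integral in $s$ shows that $v^\omega$ solves \eqref{Du-1}.

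Stationarity of $v^\omega$ follows from the covariance $U_{\tau_h \omega}(s,r) = U_\omega(s+h, r+h)$ inherited from the stationarity of $\mu_\omega$ through $A(s)$, combined with the joint stationarity hypothesis on $(g_\omega, p_\infty)$; a change of variable $r \mapsto r-h$ in the integral defining $v^{\tau_h \omega}$ yields $v^{\tau_h \omega}(\xi,s) = v^\omega(\xi, s+h)$. For uniqueness, the difference $w$ of two stationary $L^\infty(\mathbb R; L^2(\mathbb T^d))$ solutions satisfies the homogeneous equation $\partial_s w = A(s) w$, so $w(\cdot,s) = U_\omega(s,r) w(\cdot,r)$; sending $r \to -\infty$ in the contraction estimate, using the a.s. boundedness of $\|w(\cdot,r)\|_{L^2}$ inherited from stationarity, forces $w(\cdot,s)$ to be $\xi$-independent, and substituting back into the equation yields $w$ constant in $s$ as well.

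The main obstacle is the explicit identification of the limit in Lemma \ref{L4-add} with the $p_\infty$-pairing: this duality is precisely what allows the solvability condition \eqref{Du-3} to translate into exponential decay on the codimension-one hyperplane $\{h : \langle h, p_\infty(\cdot,r)\rangle = 0\}$. Once this identification is in place, the remaining arguments are routine semigroup manipulations; the only additional bookkeeping is the joint measurability of the integrand in $(\omega,s,r)$, which follows from the strong continuity of $U_\omega$ in its time parameters.
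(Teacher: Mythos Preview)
Your proposal is correct and follows essentially the same approach as the paper: both construct the solution via the backward Duhamel integral $v(\xi,s)=\int_{-\infty}^s U(s,r)g_\omega(\cdot,r)\,dr$ and establish its convergence using the exponential decay from Lemma~\ref{L4-add} combined with the orthogonality condition \eqref{Du-3}. Your version is in fact more complete than the paper's, which simply asserts that stationarity is ``clear'' and omits the uniqueness argument entirely; your explicit identification of the limit constant $C_\infty$ with $\langle h,p_\infty(\cdot,r)\rangle$ and your uniqueness proof via the contraction estimate fill these gaps.
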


For the proof of Lemma \ref{Du} see Appendix 2.

\medskip
The statement of Lemma \ref{Du} suggests the following choice of $\beta_\omega(s)$ in \eqref{Fcorr}:  
\begin{equation}\label{Beta}
 \beta_\omega(s) =  \int\limits_{\mathbb R^d} \int\limits_{\mathbb T^d} z \, a (z) \, \mu( \xi, \xi -z; s) \,  p_\infty(\xi, s) \, d\xi dz.
 \end{equation}
Indeed, considering \eqref{Fcorr-1}, \eqref{fcorr-1} and \eqref{uniq-2}, by Lemma \ref{Du} we conclude that 
for $\beta$ defined in \eqref{Beta} equation \eqref{Fcorr} has a stationary solution $\varkappa_1 \in L^\infty\big( -\infty,+\infty;(L^2(\mathbb{T}^d))^d  \big) $, which is uniquely defined up to an additive constant.

  We then define the first corrector $\varkappa_1$ as a stationary solution of  \eqref{Fcorr} and observe that
all terms of order $\varepsilon^{-1}$  in \eqref{Aw-t}, \eqref{ml_1} vanish.

\begin{remark}\label{rem_nrlim}
Observe that in the symmetric case $a(z)=a(-z)$ and $\mu(\xi,z,t)=\mu(z,\xi,t)$  the function $p_\infty(\xi,t)=1$
and $\beta(t)=0$. Therefore, in this case the homogenized problem is deterministic, it reads
$$
\partial_t u=\mathrm{div}\big(\Theta\nabla u\big)\ \hbox{\rm in }\mathbb R^d\times(0,T],\qquad u(x,0)=u_0.
$$     
 \end{remark}

\section{Proof of Theorem \ref{MT}. Part I.}
\label{Proof-2}

\subsection{Terms of order $\varepsilon^0$}

Using \eqref{Aw-t} and \eqref{ml_1} we collect all the terms of order $\varepsilon^{0}$  in \eqref{Aw}:
\begin{equation}\label{II-1}
\frac{\partial u}{\partial t} (x^\varepsilon, t)
- {\cal B}_\omega  \big(\frac{x}{\varepsilon}, \frac{t}{\varepsilon^2}\big)\! \cdot \! \nabla \nabla u (x^\varepsilon, t),
\end{equation}
where 
\begin{equation}\label{II-3}
{\cal B}_\omega  (\xi, s) =
 -  \frac{\partial \varkappa_2}{\partial s} (\xi,s) + A_\omega(s) \varkappa_2 (\xi,s) + \tilde g_\omega (\xi,s), \quad \xi = \frac{x}{\varepsilon}, \ s= \frac{t}{\varepsilon^2}.
\end{equation}
Here the operator $ A(s)$  is the same as in \eqref{Acorr-1} or \eqref{Du-2}, and
\begin{equation}\label{II-4}
\begin{array}{l}
\displaystyle
 \tilde g_\omega (\xi,s) =
 \varkappa_1 (\xi,s)\! \otimes \! \beta_\omega(s)
 \\[3mm]
\displaystyle
  +  \int\limits_{\mathbb R^d}\! \big( \frac12 z\!\otimes\!z\! - z \!\otimes\!\varkappa_1 (\xi\!-\!z, s )\big)
\  a (z) \mu (\xi, \xi\! -\!z; s ) \, dz.
 \end{array}
 \end{equation}
 Letting 
 \begin{equation}\label{II-6}
\Theta^{ij}_\omega(s) = \int\limits_{\mathbb T^d}  \tilde g^{ij}_\omega( \xi, s) \, p_\infty(\xi, s, \omega) \, d\xi,
 \end{equation}
and applying Lemma \ref{Du} we conclude that the equation
\begin{equation}\label{II-5}
 \frac{\partial v}{\partial s} (\xi, s ) = A_\omega(s) v (\xi, s) + \tilde g^{ij}_\omega( \xi, s) - \Theta^{ij}_\omega(s),
  \end{equation}
has a stationary solution which is unique up to an additive constant. Denoting this solution by $\varkappa^{ij}_2$ we obtain the matrix function $\varkappa_2=\{\varkappa^{ij}_2\}_{i,j=1}^d\in 
L^\infty(-\infty,+\infty;L^2(\mathbb{T}^d)^{d^2})$. Substituting  $\varkappa_2$ in \eqref{II-3} yields
\begin{equation}\label{II-7}
{\cal B}_\omega  (\xi, s) =  \Theta_\omega(s), \quad \Theta_\omega(s) = \{ \Theta^{ij}_\omega(s)\}_{i,j=1}^d,
\end{equation}
where $\Theta^{ij}_\omega(s)$ is the stationary field defined by \eqref{II-6}.

Finally,  the terms of order $\varepsilon^0$ in \eqref{Aw} take the form
\begin{equation}\label{II-8}
\frac{\partial u}{\partial t} (x^\varepsilon, t)
- \Theta_\omega  \big( \frac{t}{\varepsilon^2}\big) \cdot  \nabla \nabla u (x^\varepsilon, t)
\end{equation}
with the stationary field  $\Theta_\omega(\cdot)$ defined in \eqref{II-6}. 
\begin{lemma}\label{l_posidef}
  The matrix $\Theta_\omega$ is positive definite, there exists a constant $\lambda>0$ such that a.s.
   $\lambda|\zeta|^2 \leqslant \Theta_\omega(t)\zeta\cdot\zeta\leqslant \lambda^{-1}|\zeta|^2$
   for all $t\in\mathbb R$  and $\zeta\in\mathbb R^d$.
\end{lemma}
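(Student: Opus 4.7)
The upper bound is direct. From the definition \eqref{II-6}, the explicit form of $\tilde g_\omega$ in \eqref{II-4}, together with the a.s.\ uniform estimates $\alpha_1 \le \mu_\omega \le \alpha_2$ and $\pi_1 \le p_\infty \le \pi_2$ (Proposition~\ref{uniq}), the moment condition \eqref{M1}, the $L^\infty(\mathbb{R}; L^2(\mathbb{T}^d))$-bound on $\varkappa_1$ supplied by Lemma~\ref{Du}, and the uniform boundedness of $\beta_\omega$ which follows from its explicit formula \eqref{Beta}, one estimates each of the three terms comprising $\tilde g_\omega \zeta \otimes \zeta$ in $L^2(\mathbb{T}^d)$ by Cauchy--Schwarz and obtains $|\Theta_\omega(t)\zeta\cdot\zeta| \le C|\zeta|^2$ with a deterministic $C$.

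For the lower bound the strategy is a sum-of-squares plus time-derivative representation. Fix $\zeta \in \mathbb{R}^d$, write $\psi(\xi, s) = \varkappa_1(\xi, s) \cdot \zeta$ and $q(\xi, z, s) = a(z) \mu_\omega(\xi, \xi - z; s)$. Contracting \eqref{Fcorr} with $\zeta$ expresses $\beta_\omega(s) \cdot \zeta$ in terms of $\partial_s \psi$, $A_\omega(s)\psi$ and $\int (z \cdot \zeta) q\, dz$; substituting this into the $\zeta \otimes \zeta$-contraction of \eqref{II-4}, applying the elementary identities
\[
\tfrac12(z\cdot\zeta)^2 - (z\cdot\zeta)\bigl(\psi(\xi) - \psi(\xi-z)\bigr) = \tfrac12\bigl(z\cdot\zeta + \psi(\xi) - \psi(\xi-z)\bigr)^2 - \tfrac12\bigl(\psi(\xi)-\psi(\xi-z)\bigr)^2
\]
and $\psi(\xi)\bigl(\psi(\xi) - \psi(\xi-z)\bigr) = \tfrac12(\psi(\xi) - \psi(\xi-z))^2 + \tfrac12(\psi(\xi)^2 - \psi(\xi-z)^2)$, then integrating against $p_\infty(\xi,s)\, d\xi$ and invoking $-\partial_s p_\infty = A_\omega^\ast(s) p_\infty$ from Proposition~\ref{uniq} to convert the remaining difference into $\int \psi^2 \partial_s p_\infty\, d\xi$, one should arrive at the representation
\[
\Theta_\omega(s)\, \zeta \cdot \zeta \;=\; \tfrac12\,\frac{d}{ds} Q_\omega(s) \;+\; \tfrac12\, D_\omega(s),
\]
where $Q_\omega(s) = \int_{\mathbb{T}^d} p_\infty(\xi,s)\, \psi(\xi,s)^2\, d\xi$ and $D_\omega(s) = \int_{\mathbb{T}^d} p_\infty(\xi,s) \int_{\mathbb{R}^d}\bigl(z\cdot\zeta + \psi(\xi,s) - \psi(\xi-z,s)\bigr)^2 q(\xi,z,s)\, dz\, d\xi$. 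Bounding $p_\infty q \ge \pi_1 \alpha_1\, a(z)$ from below and noting that, once the kernel no longer depends on $\xi$, the cross-term of the expanded square reduces to $2\int(z\cdot\zeta)\, a(z)\bigl[\int_{\mathbb{T}^d}(\psi(\xi) - \psi(\xi-z))\,d\xi\bigr]\,dz = 0$ by the periodicity of $\psi$, one concludes $D_\omega(s) \ge \pi_1 \alpha_1 \int_{\mathbb{R}^d}(z\cdot\zeta)^2 a(z)\, dz \ge \lambda_0 |\zeta|^2$, provided the second-moment matrix $\int z \otimes z\, a(z)\, dz$ is non-degenerate (an implicit complement to \eqref{M1}).

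The main obstacle is the surviving contribution $\tfrac12 \partial_s Q_\omega(s)$: it is the derivative of a bounded stationary process and is not pointwise of definite sign. To upgrade the identity to the a.s.\ pointwise-in-$t$ bound asserted by the lemma, the natural attempt is to absorb this derivative by combining the above with the second-corrector equation \eqref{II-5}, which reads pointwise in $\xi$ as $\Theta_\omega(s)\zeta\cdot\zeta = \tilde g_\omega \zeta\cdot\zeta - \partial_s(\varkappa_2 \zeta\cdot\zeta) + A_\omega(s)(\varkappa_2 \zeta \cdot \zeta)$, and integrating it against a weight different from $p_\infty(\cdot,s)\, d\xi$ chosen so that the time-derivative is cancelled against a nonnegative quadratic form. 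If no such pointwise absorption is available, the inequality nevertheless holds in mean: by stationarity $\mathbb{E}[\partial_s Q_\omega] = \partial_s \mathbb{E} Q_\omega = 0$, so $\mathbb{E}[\Theta_\omega(t)]\, \zeta \cdot \zeta = \tfrac12\, \mathbb{E}[D_\omega(s)] \ge \tfrac12\, \pi_1 \alpha_1 \lambda_0 |\zeta|^2$, and this averaged bound is exactly what enters \eqref{th-3} and guarantees the positive-definiteness of the constant effective matrix $\Theta$.
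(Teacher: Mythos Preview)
The paper does not give a self-contained argument for this lemma; it simply says the statement ``can be proved in the same way as Proposition~5.1 in \cite{AA}''. Your sum-of-squares computation is precisely the non-autonomous adaptation of that argument, and your identity
\[
\Theta_\omega(s)\,\zeta\cdot\zeta \;=\; \tfrac12\,\partial_s Q_\omega(s)\;+\;\tfrac12\,D_\omega(s),
\qquad
Q_\omega(s)=\int_{\mathbb T^d}p_\infty(\xi,s)\,\psi(\xi,s)^2\,d\xi,
\]
with $D_\omega(s)\ge \pi_1\alpha_1\int_{\mathbb R^d}(z\cdot\zeta)^2a(z)\,dz$, is correctly derived. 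The upper bound and the lower bound for the averaged matrix $\Theta=\mathbb{E}\,\Theta_\omega$ (obtained by taking expectation and killing $\partial_s Q_\omega$ through stationarity) are both fine, and the latter is exactly what the paper needs immediately afterwards to conclude that the constant matrix $\Theta$ in \eqref{th-3} is positive definite.

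What you have not proved is the \emph{pointwise in $t$} inequality $\Theta_\omega(t)\zeta\cdot\zeta\ge\lambda|\zeta|^2$ that the lemma actually asserts, and you are candid about this. The obstacle you isolate is real: the term $\tfrac12\partial_s Q_\omega(s)$ is the time derivative of a bounded stationary process and carries no sign; it is absent in the autonomous setting of \cite{AA}, where $\varkappa_1$ and $p_\infty$ are time-independent and the identity reduces directly to $\Theta\zeta\cdot\zeta=\tfrac12 D$. Your attempted fix via the $\varkappa_2$-equation integrated against a different weight is not carried out, and in fact integrating \eqref{II-5} against $p_\infty$ simply reproduces the definition \eqref{II-6} and gives nothing new. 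So the pointwise bound remains a gap in your argument. Since the paper's entire proof is the reference to \cite{AA} --- an autonomous result --- it does not indicate how to absorb $\partial_s Q_\omega$ either; you have thus reproduced the cited approach, proved what the paper actually uses (positive definiteness of $\Theta=\mathbb{E}\,\Theta_\omega$), and flagged a step that the non-autonomous extension of the \cite{AA} argument does not automatically deliver.
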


This statement can be proved in the same way as Proposition 5.1 in \cite{AA}. Consequently, the matrix
\begin{equation}\label{Theta}
\Theta = \mathbb{E} \Theta_\omega (s)
\end{equation}
is also positive definite, and does not depend on $s$ due to the stationarity of $\Theta_\omega(\cdot)$.

\subsection{Partial homogenization}

Let $\rho^\eps$ be a solution of the following problem
\begin{equation}\label{apriori-1}
\partial_t  \rho^\eps(x,t) = \Theta_\omega \big(\frac{t}{\varepsilon^2} \big)  \cdot  \nabla \nabla \rho^\eps (x, t), \quad \rho^\eps(x,0) = u_0 \in {\cal S}(\mathbb{R}^d).
\end{equation}
Differentiating this equation in spatial variables and considering the fact that $\Theta_\omega$ does not depend on $x$
one can easily check that  $\rho^\eps(x,t) \in C^1((0,T), {\cal S}(\mathbb{R}^d))$ for any $T$.
  We then substitute $\rho^\eps$ for $u$ in \eqref{w_eps} and define an ansatz $w^\varepsilon$ by formula \eqref{w_eps}
 with $\beta_\omega(s)$ given by \eqref{Beta} and  $\varkappa_1(\xi,s)$ and  $\varkappa_2(\xi,s)$ being stationary
 solutions of equations \eqref{Fcorr-1}--\eqref{fcorr-1} and \eqref{II-4}--\eqref{II-5}, respectively.
  It follows from \eqref{II-8} that $w^\varepsilon$ satisfies the following equation
\begin{equation}\label{w_eps_bis}
\begin{array}{l}
\displaystyle
\partial_t w^{\varepsilon} -  L_\omega^{\varepsilon} w^{\varepsilon} =
\partial_t \rho^\eps(x^\varepsilon,t) -
 \Theta_\omega  \big(\frac{t}{\varepsilon^2} \big) \cdot \nabla \nabla \rho^\eps(x^\varepsilon,t) \ + \ \phi^\varepsilon(x^\varepsilon,t) =
\phi^\varepsilon(x^\varepsilon,t),
\\[3mm] \displaystyle
 w^{\varepsilon}(x,0) = u_0(x) + \psi^\varepsilon(x),
 \end{array}
\end{equation}
where $x^\varepsilon$ is introduced in \eqref{G} with $\beta_\omega(s)$ given by \eqref{Beta}, the remainder term $\phi^\varepsilon(x^\varepsilon,t)$ is defined in \eqref{fiplus}, and
$$
\psi^\varepsilon (x) \ = \ \varepsilon \varkappa_1 (\frac{x}{\varepsilon},0)\cdot \nabla u_0(x) + \varepsilon^2 \varkappa_2 (\frac{x}{\varepsilon},0)\cdot \nabla \nabla u_0 (x). 
$$
It is strightforward to check that for any $u_0\in \mathcal{S}(\mathbb R^d)$
\begin{equation}\label{small_ini}
  \|\psi^\eps\|_{L^2(\mathbb R^d)}\ \to\ 0,\quad\hbox{as }\eps\to0.
\end{equation}
Consequently, the difference  $v^\varepsilon = w^\varepsilon - u^\varepsilon$, where $u^\varepsilon$ is the solution of \eqref{th-2}, 
satisfies the following problem:
\begin{equation}\label{v_eps_bis}
\partial_t v^{\varepsilon}(x,t) -  L_\omega^{\varepsilon} v^{\varepsilon}(x,t) = \phi^\varepsilon(x^\varepsilon,t), \quad v^{\varepsilon}(x,0) =  \psi^\varepsilon(x).
\end{equation}
Estimates for $v^{\varepsilon}(\cdot)$ rely on the following statement.
\begin{proposition}\label{prop_apriori}
A solution of the problem
\begin{equation}\label{au_apriori}
\partial_t \Xi^{\varepsilon}(x,t) -  L_\omega^{\varepsilon} \Xi^{\varepsilon}(x,t) = f(x,t), \quad \Xi^{\varepsilon}(x,0) =  g_0(x),
\end{equation}
with $f\in L^2(0,T;L^2(\mathbb R^d))$ and $g_0\in L^2(\mathbb R^d)$ admits the following upper bound
$$
\|\Xi\|_{L^\infty(0,T;L^2(\mathbb R^d))}\leqslant C_1(T)\|f\|_{L^2(0,T;L^2(\mathbb R^d))}
+C_2\|g_0\|_{L^2(\mathbb R^d)}
$$
with constants $C_1(T)$ and $C_2$ that do not depend on $\eps$.
\end{proposition}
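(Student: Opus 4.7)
My plan is to derive a weighted $L^2$ energy estimate using the stationary density $p_\infty$ from Proposition~\ref{uniq} as a weight. Set $p^\eps(x,t):=p_\infty(x/\eps,t/\eps^2)$; by \eqref{uniq-2}, $\pi_1\le p^\eps\le\pi_2$, so the $p^\eps$-weighted and the standard $L^2(\mathbb R^d)$ norms are equivalent. Introduce
\[
E(t):=\int_{\mathbb R^d}(\Xi^\eps(x,t))^2 p^\eps(x,t)\,dx,
\]
and differentiate in $t$ using \eqref{au_apriori} to obtain
\[
E'(t)=2\int \Xi^\eps(L_\omega^\eps \Xi^\eps)\, p^\eps\,dx + 2\int \Xi^\eps f\, p^\eps\,dx + \int(\Xi^\eps)^2\partial_t p^\eps\,dx.
\]

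The core of the argument is the analysis of the first term via the algebraic identity $2a(b-a)=-(b-a)^2+(b^2-a^2)$. This splits the operator term into a manifestly nonpositive Dirichlet-form-like expression (which I drop) plus a remainder $2R'$ containing $\Xi^\eps(y)^2-\Xi^\eps(x)^2$. Swapping $x\leftrightarrow y$ in the $\Xi^\eps(y)^2$-half and then rescaling $z=(y-x)/\eps$ in one half and $z=(x-y)/\eps$ in the other, the resulting inner $y$-integral collapses to $\eps^d\bigl(A^*_\omega(t/\eps^2)p_\infty\bigr)(x/\eps,t/\eps^2)$, where $A^*_\omega(s)$ is the $L^2(\mathbb T^d)$-adjoint of the operator in \eqref{Acorr-1}. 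Since $p_\infty$ is stationary for $-\partial_s p = A^*_\omega(s)p$, one has $A^*_\omega(s)p_\infty=-\partial_s p_\infty$, and combining this with $\partial_t p^\eps=\eps^{-2}(\partial_s p_\infty)(x/\eps,t/\eps^2)$ yields exactly
\[
2R'=-\int(\Xi^\eps)^2 \partial_t p^\eps\,dx,
\]
which precisely cancels the last term in $E'(t)$.

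What remains is routine. Cauchy-Schwarz and $p^\eps\le\pi_2$ give $E'(t)\le 2\sqrt{\pi_2}\,E(t)^{1/2}\|f(\cdot,t)\|_{L^2(\mathbb R^d)}$; dividing by $2E(t)^{1/2}$, integrating over $[0,t]$ and applying Cauchy-Schwarz in time yields $E(t)^{1/2}\le E(0)^{1/2}+\sqrt{\pi_2 T}\,\|f\|_{L^2(0,T;L^2(\mathbb R^d))}$. Converting back to unweighted norms with the aid of $\pi_1\le p^\eps\le\pi_2$ and taking the supremum over $t\in[0,T]$ yields the stated bound with $C_2=\sqrt{\pi_2/\pi_1}$ and $C_1(T)=\sqrt{T\pi_2/\pi_1}$, both independent of $\eps$ and $\omega$. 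The main obstacle is the exact cancellation identity for $2R'$: it rests on careful bookkeeping through the two changes of variables and on recognising the resulting expression as $\eps^d$ times the torus adjoint applied to $p_\infty$. Once this is in place, no further ingredient (no mixing, no ergodicity beyond the existence and two-sided positivity of $p_\infty$ from Proposition~\ref{uniq}) is needed.
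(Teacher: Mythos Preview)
Your proposal is correct and follows essentially the same route as the paper's proof: multiply by $p_\infty(x/\eps,t/\eps^2)\,\Xi^\eps$, use the algebraic identity to split off a nonnegative Dirichlet form, and exploit $-\partial_s p_\infty=A^*_\omega(s)p_\infty$ to cancel the remainder against the time-derivative-of-weight term. The paper hides the cancellation computation behind a reference to \cite[Proposition 6.1]{AA} and finishes with Gronwall, while you spell out the change-of-variables bookkeeping and integrate the inequality for $\sqrt{E}$ directly; these are cosmetic differences only.
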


\begin{proof}
Multiplying the equation in \eqref{au_apriori} by $p_\infty\big(\frac x\eps,\frac t{\eps^2}\big)\Xi(x,t)$ and integrating the resulting relation over $\mathbb R^d\times (0,t)$, $t\leqslant T$, we obtain
$$
\begin{array}{c}
\displaystyle
\frac12\int\limits_0^t\int\limits_{\mathbb R^d}p_\infty\big(\frac x\eps,\frac s{\eps^2}\big)\partial_s((\Xi(x,s))^2) dxds
-\int\limits_0^t\int\limits_{\mathbb R^d}p_\infty\big(\frac x\eps,\frac s{\eps^2}\big)\Xi(x,s)L_\omega^{\varepsilon} \Xi^{\varepsilon}(x,s) dxds\\[3mm]
\displaystyle
=\int\limits_0^t\int\limits_{\mathbb R^d}p_\infty\big(\frac x\eps,\frac s{\eps^2}\big)f(x,s)\Xi(x,s)dxds.
\end{array}
$$
Taking into account the relation $-\partial_s p_\infty(y,s)=A^*(s)p_\infty(y,s)$ and making the same 
transformations as in the proof of  \cite[Proposition 6.1]{AA} we derive the relation
$$
\begin{array}{c}
\displaystyle
\int\limits_{\mathbb R^d}p_\infty\big(\frac x\eps,\frac t{\eps^2}\big)(\Xi(x,t))^2 dx\\[3mm]
\displaystyle 
+\int\limits_0^t\int\limits_{\mathbb R^d}a\Big(\frac{x-y}\eps\Big)
\mu\Big(\frac x\eps,\frac y\eps,\frac s{\eps^2}\Big)p_\infty\big(\frac x\eps,\frac s{\eps^2}\big)
(\Xi(x,s)- \Xi(y,s))^2 dxds\\[3mm]
\displaystyle
=\int\limits_{\mathbb R^d}p_\infty\big(\frac x\eps,0\big)(g_0(x))^2 dx
+2\int\limits_0^t\int\limits_{\mathbb R^d}p_\infty\big(\frac x\eps,\frac s{\eps^2}\big)f(x,s)\Xi(x,s)dxds.
\end{array}
$$
In view of \eqref{uniq-2} the desirted upper bound follows from the latter relation by the Gronwall theorem.
\end{proof}
Since both $\|\phi^\varepsilon \|_{\infty}$ and $\| \psi^\varepsilon \|_{L^2(\mathbb{R}^d)}$  tend to zero as $\eps\to0$ (see Proposition \ref{fi_0} and estimate \eqref{small_ini}), then applying Proposition \ref{prop_apriori} to problem 
\eqref{w_eps_bis} we conclude 
that
\begin{equation}\label{apriori-2}
\|v^\eps\|_{\infty} = \| w^\varepsilon - u^\varepsilon\|_{\infty}  \to 0 \quad \mbox{ as } \;  \eps \to 0.
\end{equation}
On the other hand, considering the structure of the ansatz $w^\eps$ in \eqref{w_eps} one can easily show that
\begin{equation}\label{apriori-3}
 \| w^\varepsilon(x,t) - \rho^\varepsilon(x^\eps, t)\|_{\infty}  \to 0 \quad \mbox{ as } \;  \eps \to 0.
\end{equation}
Combining the last two limit relations yields
\begin{equation}\label{apriori-3biss}
 \| u^\varepsilon(x,t) - \rho^\varepsilon(x^\eps, t)\|_{\infty}  \to 0 \quad \mbox{ as } \;  \eps \to 0.
\end{equation}

Denote $\Theta = \mathbb{E} \Theta_\omega(\cdot)$ and let $u^0(x,t)$ be a solution to the Cauchy problem
\begin{equation}\label{equa_u0}
\frac{\partial u}{\partial t} = \Theta \cdot \nabla \nabla u, 
\quad u(x,0) = u_0(x), 
\quad t \in [0, T].
\end{equation}
Our next goal is to show that the solution $\rho^\eps$ of problem \eqref{apriori-1} converges to $u^0$
as $\eps\to0$.
\begin{lemma}\label{l_closedness}
  With probability one for any $u_0\in \mathcal{S}(\mathbb R^d)$ the family $\{\rho^\eps(x,t)\}$ converges to $u^0(x,t)$ in $C(0,T;L^2(\mathbb R^d))$ as $\eps\to0$. 
\end{lemma}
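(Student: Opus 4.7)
The plan is to exploit the fact that the coefficient $\Theta_\omega(t/\varepsilon^2)$ in \eqref{apriori-1} is independent of the spatial variable $x$. Applying the spatial Fourier transform reduces \eqref{apriori-1} to a family of scalar ODEs in $t$ indexed by the frequency $\xi \in \mathbb R^d$, whose explicit solutions are
$$
\hat\rho^\varepsilon(\xi,t) = \hat u_0(\xi)\exp\Big(-\int_0^t \Theta_\omega\big(\tfrac{s}{\varepsilon^2}\big)\xi\cdot\xi\,ds\Big), \qquad
\hat u^0(\xi,t) = \hat u_0(\xi)\,e^{-t\,\Theta\xi\cdot\xi}.
$$
Both exponents are non-negative by Lemma \ref{l_posidef}.

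Using the elementary inequality $|e^{-a}-e^{-b}|\leqslant |a-b|$ for $a,b\geqslant 0$, and setting
$$
M^\varepsilon(t) := \int_0^t \Theta_\omega\big(\tfrac{s}{\varepsilon^2}\big)ds - t\,\Theta
= \varepsilon^2\!\int_0^{t/\varepsilon^2}\!\big(\Theta_\omega(\tau)-\Theta\big)d\tau,
$$
I obtain the pointwise bound $|\hat\rho^\varepsilon(\xi,t) - \hat u^0(\xi,t)| \leqslant |\hat u_0(\xi)|\,|\xi|^2\,\|M^\varepsilon(t)\|$. Since $u_0 \in \mathcal{S}(\mathbb R^d)$, the quantity $\int_{\mathbb R^d}|\hat u_0(\xi)|^2 |\xi|^4 d\xi$ is finite, and Plancherel's theorem yields
$$
\sup_{t\in[0,T]}\|\rho^\varepsilon(\cdot,t)-u^0(\cdot,t)\|_{L^2(\mathbb R^d)}^2 \ \leqslant\  C(u_0)\,\Big(\sup_{t\in[0,T]}\|M^\varepsilon(t)\|\Big)^2.
$$
It therefore suffices to prove that $\sup_{t\in[0,T]}\|M^\varepsilon(t)\|\to 0$ almost surely as $\varepsilon\to 0$.

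This uniform ergodic estimate is the main technical point. Birkhoff's ergodic theorem applied to the bounded stationary ergodic matrix-valued process $\Theta_\omega(\cdot)$ (with mean $\Theta$ by \eqref{Theta}) gives, for each $\eta>0$, a random $S_0 = S_0(\omega,\eta)$ such that $\big\|\int_0^S (\Theta_\omega(\tau)-\Theta)d\tau\big\|\leqslant \eta\,S$ for all $S\geqslant S_0$. For $t\in[0,T]$ with $t/\varepsilon^2\geqslant S_0$ this gives $\|M^\varepsilon(t)\|\leqslant \eta t\leqslant \eta T$, while for $t/\varepsilon^2 < S_0$ the uniform bound $\|\Theta_\omega(\cdot)\|\leqslant \lambda^{-1}$ from Lemma \ref{l_posidef} yields $\|M^\varepsilon(t)\|\leqslant (\lambda^{-1}+\|\Theta\|)\,\varepsilon^2 S_0$; both terms can be made arbitrarily small by taking $\eta$ small and then $\varepsilon$ small. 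Since both $\rho^\varepsilon$ and $u^0$ are continuous $L^2(\mathbb R^d)$-valued functions of $t$ (this is immediate from the Fourier representations above), the sup-norm convergence upgrades to convergence in $C([0,T];L^2(\mathbb R^d))$, completing the argument. The main obstacle is the uniformity in $t$ of the ergodic average on the vanishing interval $t\in[0,T]$, which is resolved by the splitting above according to whether $t/\varepsilon^2$ is above or below the ergodic threshold $S_0$.
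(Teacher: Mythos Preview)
Your proof is correct and takes a genuinely different route from the paper's. The paper argues by compactness: it derives uniform bounds on $\rho^\eps$ in $W^{1,\infty}(0,T;H^1(\mathbb R^d))$ by differentiating \eqref{apriori-1}, obtains compactness in $C(0,T;L^2(B_R))$ for each $R$ via Sobolev embedding, then upgrades to compactness in $C(0,T;L^2(\mathbb R^d))$ using Aronson-type decay estimates, and finally identifies the limit by passing to the limit in the weak integral identity. Your approach is more direct and elementary: you exploit the spatial constancy of $\Theta_\omega(t/\eps^2)$ to solve the equation explicitly in Fourier variables and reduce everything to the uniform ergodic average $\sup_{t\in[0,T]}\|M^\eps(t)\|\to 0$, which you then establish by a clean splitting argument. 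Your proof avoids the Aronson estimates entirely and is essentially quantitative, while the paper's compactness argument is softer and relies on more external machinery; on the other hand, the paper's method would generalize more readily to situations where the coefficient depends on $x$ as well. One minor remark: when you invoke Lemma \ref{l_posidef} for the bound $\|\Theta_\omega(\cdot)\|\leqslant\lambda^{-1}$, note that the lemma controls only the quadratic form $\Theta_\omega\zeta\cdot\zeta$, i.e.\ the symmetric part of $\Theta_\omega$; but since in $\Theta_\omega\cdot\nabla\nabla$ and in $M^\eps(t)\xi\cdot\xi$ only the symmetric part contributes, your argument goes through unchanged with this reading.
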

\begin{proof}
Differentiating equation \eqref{apriori-1} in spatial variables 
it is straightforward to obtain the following estimates for $\rho^\eps$:
\begin{equation}\label{ee_compaa}
\|\rho^\eps\|_{L^\infty(0,T;H^1(\mathbb R^d))}+\|\partial_t \rho^\eps\|_{L^\infty(0,T;H^{1}(\mathbb R^d))}
\leqslant C\|u_0\|_{H^3(\mathbb R^d)},
\end{equation}
here the constant $C$ does not depend on $\eps$. Denote $B_R=\{x\in\mathbb R^d\,:\,|x|\leqslant R\}$. Since for any $R>0$ the space $W^{1,\infty}(0,T;H^1(B_R))$ is compactly imbedded to $C(0,T;L^2(B_R)$, estimate \eqref{ee_compaa} implies that
the family $\{\rho^\eps\}$ is compact in $C(0,T;L^2(B_R))$. Combining this compactness result with
the Aronson estimates, see \cite{Arons}, we conclude that $\{\rho^\eps\}$ is compact in $C(0,T;L^2(\mathbb R^d))$.
For a converging subsequence of $\{\rho^\eps\}$ denote its limit by $\rho^0$. Let $\psi(x,t)$ be a 
$C^\infty(0,T;C_0^\infty(\mathbb R^d))$ function such that $\psi(x,T)=0$. The integral identity of problem \eqref{apriori-1} reads
$$
-\int_0^T\int_{\mathbb R^d}\rho^\eps \partial_t\psi\,dxdt=\int_{\mathbb R^d} u_0 \psi(\cdot,0)\,dx+
\int_0^T\int_{\mathbb R^d}\rho^\eps \Theta\Big(\frac t{\eps^2}\Big)\cdot\nabla\nabla\psi\,dxdt
$$
Passing to the limit as $\eps\to0$
in this integral identity 
we obtain the integral identity
of problem  \eqref{equa_u0}. Therefore, $\rho^0=u^0$ and the whole family $\{\rho^\eps\}$ converges to $u^0$:
 \begin{equation}\label{apriori-4}
 \| u^0 - \rho^\varepsilon\|_{\infty}  \to 0 \quad \mbox{ as } \;  \eps \to 0.
\end{equation}
\end{proof}

Since $\|\rho^\eps(x,t)-u^0(x,t)\|_\infty=\|\rho^\eps(x^\eps,t)-u^0(x^\eps,t)\|_\infty$, then \eqref{apriori-2} - \eqref{apriori-4} yield the convergence
\begin{equation}\label{apriori-5}
 \| u^\eps (x,t) - u^0(x^\varepsilon,t)\|_{\infty}  \to 0 \quad \mbox{ as } \;  \eps \to 0,
\end{equation}
where $x^\eps$ is defined by formula \eqref{G}.
Recalling that $\beta(s)$ introduced in \eqref{Beta} is a stationary ergodic process, by the Birkhoff ergodic theorem
we have
\begin{equation}\label{apriori-6}
x^\eps =  x -  \varepsilon \int\limits_0^{t/\varepsilon^2} \beta(\tau) d\tau  =  x - \frac{b}{\eps} t -  G^\eps_0 (t),
\end{equation}
where
\begin{equation}\label{apriori-7}
b = \mathbb{E} \beta = \lim\limits_{T \to \infty} \frac{1}{T} \int\limits_0^{T} \beta(\tau) d\tau,
\end{equation}
and $ G^\eps_0 (t)$ defined by \eqref{G0} is such that a.s. for any $t>0$ we have $\eps G^\eps_0 (t)\to 0$ as $\eps\to 0$.
Thus, we have proved convergence \eqref{th-4-I} for any
$u_0 \in {\cal S}(\mathbb{R}^d)$.

Approximating any $L^2$ initial function by a sequence of ${\cal S}(\mathbb R^d)$ functions and taking into account the a priori estimates obtained in Proposition \ref{prop_apriori}
and similar estimates for the limit problem in \eqref{equa_u0} we derive the first statement of Theorem \ref{MT}.

\section{Proof of Theorem \ref{MT}. Part II} 
\label{Proof-3}

In this section we assume that mixing condition \eqref{alpha-1} holds. In this case  the asymptotic behaviour 
of the process $x^\varepsilon $ given by \eqref{apriori-6} - \eqref{apriori-7} as $\eps \to 0$ can be described  more
precisely. 
We recall that
$\beta (s) = \beta_\omega (s)$ is defined in \eqref{Beta} as follows
\begin{equation*}\label{Beta*}
 \beta(s) =  \int\limits_{\mathbb R^d} \int\limits_{\mathbb T^d} z \, a (z) \, \mu_\omega( \xi, \xi -z; s) \,  p_\infty(\xi, s) \, d\xi dz,
 \end{equation*}
and it is a bounded stationary ergodic process with mean $b=\mathbb{E} \beta$.

\begin{lemma}\label{L-3}
For the process $G^\varepsilon_0 (t)$ the invariance principle (functional central limit theorem) holds
i.e.
\begin{equation}\label{G0-lemma}
G^\varepsilon_0 (t)= 
 \varepsilon \int\limits_0^{t/\varepsilon^2}  \mathop{ \beta_\omega}\limits^{\circ} (\tau) d\tau \  \mathop{\rightarrow}\limits^{{\mathcal L}} \ \sigma \, W_t, \qquad \mathop{ \beta_\omega}\limits^{\circ} (t) = \beta_\omega(t) - b,
\end{equation}
where $W_t$ is the $d$-dimensional Wiener process, and the matrix $\sigma$ is defined by the formula:
\begin{equation}\label{sigma}
\sigma \sigma^* = 2 \int\limits_0^{\infty} \mathbb{E} \big(  \mathop{ \beta_\omega}\limits^{\circ} (0) \, \otimes \mathop{ \beta_\omega}\limits^{\circ} (t) \big) \, dt.
\end{equation}
\end{lemma}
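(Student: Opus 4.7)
The plan is to deduce \eqref{G0-lemma} from the classical continuous-time functional central limit theorem for bounded stationary strongly mixing processes. The main preparatory task is to transfer the mixing hypothesis \eqref{alpha-1} from the driving field $\mu_\omega$ to the effective drift process $\beta_\omega$, after which a standard invariance principle applies, and the covariance can then be identified with \eqref{sigma} by a direct computation.

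\emph{Step 1: from mixing of $\mu$ to mixing of $\beta$.} By construction $\beta_\omega(s)$ is a functional of $\mu_\omega(\cdot,\cdot,s)$ and of $p_\infty(\cdot,s)$. Since $p^N$ is obtained from the backward Cauchy problem \eqref{HastC} with terminal time $N\geqslant s$, each $p^N(\cdot,s)$ is measurable with respect to $\sigma\{\mu(\cdot,\cdot,\tau):s\leqslant\tau\leqslant N\}$, and in the limit $p_\infty(\cdot,s)$ is $\mathcal F^\mu_{\geqslant s}$-measurable. The quantitative input is Lemma~\ref{L4-bis}, which gives $\|p_\infty(\cdot,s)-p^{s+T}(\cdot,s)\|_{L^2(\mathbb T^d)}\leqslant C_6 e^{-\gamma_0 T}$. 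Consequently the truncated surrogate
\[
\beta_T(s)\ :=\ \int_{\mathbb R^d}\!\int_{\mathbb T^d} z\,a(z)\,\mu_\omega(\xi,\xi-z;s)\,p^{s+T}(\xi,s)\,d\xi\,dz
\]
is $\mathcal F^\mu_{[s,s+T]}$-measurable and differs from $\beta(s)$ by $O(e^{-\gamma_0 T})$ uniformly in $s$ and $\omega$. Combining this with stationarity of $\alpha(\cdot)$, the strong mixing coefficient of $\beta$ admits the bound $\alpha_\beta(t)\leqslant\alpha(t-2T)+C e^{-\gamma_0 T}$ for all $t>2T$; optimising $T$ against $t$ and invoking \eqref{alpha-1} yields $\int_0^\infty\alpha_\beta^{1/2}(t)\,dt<\infty$.

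\emph{Step 2: invariance principle.} The process $\mathring\beta_\omega$ is bounded (since $\mu$, $p_\infty$ and $\int|z|a(z)dz$ are all bounded by Lemma~\ref{L3} and \eqref{M1}) and stationary of zero mean, with strong mixing coefficient satisfying the integrability condition derived in Step~1. The classical continuous-time invariance principle for such processes (Billingsley, or the continuous-time version of Ibragimov's theorem) yields that the rescaled integrals $\varepsilon\int_0^{t/\varepsilon^2}\mathring\beta_\omega(\tau)\,d\tau$ converge in law in $C([0,T];\mathbb R^d)$ to a centred Gaussian process with independent stationary increments, that is, to $\sigma W_t$ for some deterministic matrix $\sigma$.

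\emph{Step 3: identification of $\sigma\sigma^*$.} The limit covariance is
\[
\sigma\sigma^*\ =\ \lim_{R\to\infty}\frac1R\,\mathbb E\Bigl[\int_0^R\mathring\beta(s)\,ds\,\otimes\,\int_0^R\mathring\beta(s)\,ds\Bigr].
\]
Expanding, using stationarity, and invoking the integrable decay of $\mathbb E[\mathring\beta(0)\otimes\mathring\beta(t)]$ (which follows from Davydov's covariance inequality applied with the mixing bound of Step~1 and the boundedness of $\mathring\beta$) one obtains the representation \eqref{sigma}.

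\emph{Main obstacle.} The delicate step is clearly Step~1: because $p_\infty(\cdot,s)$ is a nonlocal-in-time functional of $\mu_\omega$, the mixing of $\beta$ is \emph{not} an immediate consequence of \eqref{alpha-1}. The passage hinges on localising $p_\infty$ in time at an exponentially small $L^2$ cost via Lemma~\ref{L4-bis}, and then balancing the approximation error $e^{-\gamma_0 T}$ against the genuine mixing $\alpha(t-2T)$ so as to preserve square-root integrability. Once this transfer of mixing is established, Steps~2 and~3 are classical.
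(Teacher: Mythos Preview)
Your truncation idea in Step~1 --- replacing $p_\infty(\cdot,s)$ by the finite-horizon surrogate $p^{s+T}(\cdot,s)$ so as to localise $\beta(s)$ to a window $[s,s+T]$ of the $\mu$-filtration --- is exactly the device used in the paper. The execution, however, diverges, and your version has a real gap.

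The paper does \emph{not} attempt to bound the strong-mixing coefficient $\alpha_\beta(\cdot)$ of $\beta$. Instead it goes straight for the Gordin-type condition
\[
\int_0^\infty \big\|\,\mathbb E\big(\mathop{\beta}\limits^{\circ}(0)\,\big|\,\mathcal F^\beta_{\ge t}\big)\big\|_2\,dt<\infty,
\]
established by splitting $\mathop{\beta}\limits^{\circ}(0)=\mathop{\beta_1}\limits^{\circ}(0)+\mathop{\beta_2}\limits^{\circ}(0)$ with $\mathop{\beta_1}\limits^{\circ}(0)$ built from $p^{t/2}(\cdot,0)$ (hence $\mathcal F^\mu_{\le t/2}$-measurable) and $\mathop{\beta_2}\limits^{\circ}(0)$ the exponentially small remainder from Lemma~\ref{L4-bis}. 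Since $\mathcal F^\beta_{\ge t}\subset\mathcal F^\mu_{\ge t}$, the covariance inequality \cite[Ch.~VIII, Lemma 3.102]{ShJ} gives $\|\mathbb E(\mathop{\beta_1}\limits^{\circ}(0)\,|\,\mathcal F^\beta_{\ge t})\|_2\le C\,\alpha^{1/2}(t/2)$, and the displayed integral converges by~\eqref{alpha-1}. The invariance principle then follows from \cite[Ch.~VIII, Theorem 3.79]{ShJ} after a time-reversal $\eta(t)=\mathop{\beta}\limits^{\circ}(-t)$, which converts ``conditioning on the future'' into the ``conditioning on the past'' required by that theorem.

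Your Step~1, by contrast, asserts $\alpha_\beta(t)\le\alpha(t-2T)+Ce^{-\gamma_0 T}$. This inequality is not substantiated and is not a general fact: a uniform (in $s,\omega$) approximation $|\beta(s)-\beta_T(s)|\le Ce^{-\gamma_0 T}$ does \emph{not} automatically yield an additive $Ce^{-\gamma_0 T}$ correction to the strong-mixing coefficient, because $\alpha_\beta(t)$ is a supremum over \emph{all} events in $\mathcal F^\beta_{\le0}$ and $\mathcal F^\beta_{\ge t}$, and an arbitrary $\mathcal F^\beta_{\le0}$-event need not be close (in probability) to any $\mathcal F^{\beta_T}_{\le0}$-event. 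Making this step rigorous would require an additional argument you have not supplied. The paper's route through conditional expectations is both easier (the $L^2$ approximation error enters linearly, with no need to compare $\sigma$-algebras) and exactly tailored to the hypothesis of the invariance principle actually invoked. Your Steps~2--3 are fine once an adequate mixing-type condition for $\beta$ is in hand; the clean fix is simply to replace your $\alpha_\beta$ bound by the conditional-expectation bound above.
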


\begin{proof}
We first show that the following key estimate holds: 
\begin{equation}\label{G0Proof-1}
\|\, \mathbb{E}( \mathop{ \beta}\limits^{\circ} (0) | \, \mathcal{F}^{\beta}_{\ge t}) \, \|_2  \le C_1 e^{-C_2 t} + C_3 \alpha^{1/2}(\frac{t}{2}),
\end{equation}
where $\| \cdot \|_2 = \| \cdot \|_{L^2(\Omega)}$, $ \ \mathcal{F}^{\beta}_{\geqslant t}$ is the $\sigma$ -algebra generated by $\beta(s)$ with $s\geqslant t$ and $\ C_1, \, C_2, \, C_3$ are constants. Notice that  
$\mathcal{F}^{\mathop{ \beta}\limits^{\circ}}_{\geqslant t}=\mathcal{F}^{\beta}_{\geqslant t}$.

To obtain \eqref{G0Proof-1} we rewrite $ \mathop{ \beta}\limits^{\circ} (0)$ as the sum
$$
 \mathop{ \beta}\limits^{\circ} (0) =  \mathop{ \beta_1}\limits^{\circ} (0) +  \mathop{ \beta_2}\limits^{\circ} (0),
$$
where
\begin{equation}\label{G0Proof-2}
 \mathop{ \beta_1}\limits^{\circ} (0) =  \int\limits_{\mathbb R^d} \int\limits_{\mathbb T^d} a (z)\big( z \, \mu( \xi, \xi -z; 0) - b \big) \,  p_1(\xi, 0) \, d\xi dz,
 \end{equation}
\begin{equation}\label{G0Proof-3}
 \mathop{ \beta_2}\limits^{\circ} (0) =  \int\limits_{\mathbb R^d} \int\limits_{\mathbb T^d}  a (z)\big( z \, \mu( \xi, \xi -z; 0) - b \big) \,(p_\infty(\xi,0) -  p_1(\xi, 0)) \, d\xi dz,
 \end{equation}
 and $ p_1(\xi, s), \, s \le \frac{t}{2},$ is a solution of the problem
$$
-\partial_s  p_1 = A^\ast(s)p_1, \qquad p_1|_{s=\frac{t}{2}} = 1.
$$
By Lemma \ref{L4-bis} and Proposition \ref{uniq}, applying the Schwartz inequality we have $\|p_\infty(\cdot,s) - p_1(\cdot,s)\|_{L^2(\mathbb T^d)} \le 
\tilde C_1 e^{-\gamma(\frac t2-s)}$, therefore,
\begin{equation}\label{G0Proof-4}
\| \, \mathbb{E}( \mathop{ \beta_2}\limits^{\circ} (0) | \, \mathcal{F}^{\beta}_{\ge t}) \,\|_2 \le \| \, \mathop{ \beta_2}\limits^{\circ} (0) \,\|_2 \le C_1 e^{-\frac \gamma2 t}.
\end{equation}

Notice that by the definition of $\beta(s)$ and by the construction of $p_1$ we have
$$
{\mathcal{F}}^\beta_{\ge t} \ = \ {\mathcal{F}}^\mu_{\ge t} \quad \mbox{ and } \quad \mathop{ \beta_1}\limits^{\circ} (0)
\ \hbox{is } {\mathcal{F}}^\mu_{\le {t/2}}\hbox{ measurable}.
$$
 Consequently, due to \eqref{alpha}, by \cite[Chapter 8, Lemma 3.102]{ShJ} the following inequality holds:
 \begin{equation}\label{G0Proof-5}
\| \, \mathbb{E}( \mathop{ \beta_1}\limits^{\circ} (0) | \, \mathcal{F}^{\beta}_{\ge t}) \, \|_2  = \| \, \mathbb{E}( \mathop{ \beta_1}\limits^{\circ} (0) | \, \mathcal{F}^{\mu}_{\ge t}) \, \|_2  \le C_3 \alpha^{1/2}
\Big(\frac{t}{2}\Big).
\end{equation}
Thus, \eqref{G0Proof-4} - \eqref{G0Proof-5} imply estimate \eqref{G0Proof-1}. Consequently, if \eqref{alpha-1}  holds, then
 \begin{equation}\label{G0Proof-6}
\int\limits_0^{\infty} \| \, \mathbb{E}( \mathop{ \beta}\limits^{\circ} (0) | \, \mathcal{F}^{\beta}_{\ge t}) \, \|_2 \, dt < \infty.
\end{equation}

According to \cite[Theorem 3.79 (Chapter VIII),]{ShJ}, inequality  \eqref{G0Proof-6} ensures the invariance principle for the process $\eta(t) = \mathop{ \beta}\limits^{\circ} (-t)$. Indeed, since
$$
\mathbb{E}( \mathop{ \beta}\limits^{\circ} (0) | \, \mathcal{F}^{\beta}_{\ge t}) = \mathbb{E}( \eta (0) | \, \mathcal{F}^{\eta}_{\le -t}) = \mathbb{E}( \eta(t) | \, \mathcal{F}^{\eta}_{\le 0}),
$$
then \eqref{G0Proof-6} implies
$$
\int\limits_0^{\infty} \| \, \mathbb{E}( \eta(t) | \, \mathcal{F}^{\eta}_{\le 0}) \, \|_2 \, dt < \infty.
$$
Then the invariance principle holds for the process $\eta(t)$, and consequently, for the process  $\eta(-t) = \mathop{ \beta}\limits^{\circ} (t)$. 
This completes the proof of Lemma.
\end{proof}

By Lemma \ref{L-3} we have 
$$
G^\eps_0 (t) \mathop{\rightarrow}\limits^{{\mathcal L}} \ \sigma \, W_t \quad \mbox{as} \quad \eps \to 0.
$$
From this relation, taking into account \eqref{apriori-6} and the smoothness of the function $u^0(x,t)$, we  
derive that
$$
\textstyle
u^0\big(x-G^\eps_0(t),t) \mathop{\rightarrow}\limits^{{\mathcal L}} \ u^0\big(x-\sigma W_t,t)
$$
in the space $L^2(0,T; L^2(\mathbb R^d))$ equipped with the strong topology. Observe that \eqref{apriori-5} 
can be written in the form  $\|u^\varepsilon \big( x+\frac{b}{\eps}t, \, t \big)-u^0(x-G^\eps_0(t),t)\|_\infty\to0$.
Therefore, representing $u^\eps\big(x+\frac b\eps t,t\big)=u^0(x-G^\eps_0(t),t)+
\big[u^\eps\big(x+\frac b\eps t,t\big)-u^0(x-G^\eps_0(t),t)\big]$, we obtain
 \begin{equation}\label{SPDE-1}
u^\varepsilon \big( x+\frac{b}{\eps}t, \, t \big) \  \mathop{\rightarrow}\limits^{{\mathcal L}} \ u^0(x- \sigma W_t, \, t), \quad\hbox{as } \eps \to 0
\end{equation}
The second part of Theorem \ref{MT} is proved.

\section{The case:  $ \mu_\omega(x,y; t) = \lambda_\omega(t) \, \mu^0(x,y)$. }
\label{sec_product}

In this section we consider the special case of problem \eqref{ANA_eps} when $\mu_\omega(x,y; t)$ has the product structure:  $ \mu_\omega(x,y; t) = \lambda_\omega(t) \, \mu^0(x,y)$ with a 
stationary positive process $ \lambda_\omega(t) $ and deterministic periodic function $\mu^0$. The main result of this section reads:

\begin{theorem}\label{MT-special}
Let the functions $a(z)$ and $\mu^0(x,y)$ satisfy conditions \eqref{M1} - \eqref{lm-random}, and assume that
 $\lambda_\omega(\tau), \ \tau \in \R,$ is a stationary ergodic positive process such that
\begin{equation}\label{th-1}
\begin{array}{l}
1) \ \ m:= \mathbb{E} \lambda_\omega(0) <\infty, \\[2mm]
2) \ \ \mathbb{E} (\lambda_\omega(0) - m)^2 < \infty, \\[2mm]
3) \ \int\limits_0^{\infty} \| \mathbb{E}((\lambda_\omega(\tau)-m) | \mathcal{F}^\lambda_{\leqslant 0} ) \|_2 \, d\tau < \infty.
\end{array}
\end{equation}
Then 
there exists a constant vector $b\in\mathbb R^d$ and a positive definite symmetric constant matrix $\Theta$ such that
for any $u_0\in L^2(\mathbb R^d)$ and any $T>0$ we have 
\begin{equation}\label{th-4}
 u^{\varepsilon}_\omega ( x + \frac{b}{\eps}\ t, \ t )  \  \mathop{\rightarrow}\limits^{{\cal L}} \
u^0 (x - b \sigma W_t,\ t), \quad \mbox{as } \; \varepsilon \to 0, \qquad t \in [0, T];
\end{equation}
here $u^\eps(x,t)$ is the solution of problem \eqref{ANA_eps}, 
$u^0(x,t)$ is the solution to the homogenized problem
$$
\partial_t u=\mathrm{div}\big(\Theta\nabla u\big)=0 \ \ \hbox{in }\mathbb R^d\times(0,T],\quad u(x,0)=u_0(x), 
$$
the symbol $\mathop{\rightarrow}\limits^{{\cal L}} $ stands the the convergence in law in the space $L^\infty(0,T;L^2(\mathbb R^d))$ equipped with the strong topology,  $ W_t$ is a standard Wiener process,  and
$$
\sigma^2 = 2 \int\limits_0^{\infty} \mathbb{E} \big( (\lambda(0)-m) \, (\lambda(\tau)-m) \big) \, d \tau.
$$
\end{theorem}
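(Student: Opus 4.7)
\emph{Proof plan.} The plan is to specialize the scheme of Theorem \ref{MT}(II) to the multiplicative structure $\mu_\omega(x,y;t)=\lambda_\omega(t)\mu^0(x,y)$ and to exploit the decoupling of spatial and temporal randomness it produces. The decisive simplification is that the cell operator factorizes as $A_\omega(s)=\lambda_\omega(s)A^0$, where the deterministic, time-independent operator
$A^0 v(\xi)=\int_{\mathbb R^d}(v(\xi-z)-v(\xi))a(z)\mu^0(\xi,\xi-z)\,dz$
carries no randomness. Consequently the stationary adjoint solution of Proposition \ref{uniq} is itself deterministic and time-independent, $p_\infty(\xi,s)\equiv v^0(\xi)$, with $v^0$ the positive function normalized by \eqref{FA1} applied to $(A^0)^*$. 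Formula \eqref{Beta} then reduces to $\beta_\omega(s)=\lambda_\omega(s)b_0$ with the deterministic vector $b_0=\int_{\mathbb R^d}\int_{\mathbb T^d}z\,a(z)\mu^0(\xi,\xi-z)v^0(\xi)\,d\xi\,dz$, so that the effective drift equals $b=mb_0$.

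Plugging $\beta_\omega(s)=\lambda_\omega(s)b_0$ into the cell equation \eqref{Fcorr-1} gives $\partial_s\varkappa_1^i=\lambda_\omega(s)(A^0\varkappa_1^i+\tilde f^i)$ with a deterministic, $v^0$-orthogonal source $\tilde f^i$; the Fredholm alternative for $A^0$ (cf.\ \cite{AA}) provides a \emph{time-independent} deterministic solution $\varkappa_1(\xi)$. The same reasoning applied to \eqref{II-3}--\eqref{II-6} yields $\tilde g_\omega(\xi,s)=\lambda_\omega(s)g^0(\xi)$ and $\Theta_\omega(s)=\lambda_\omega(s)\Theta^0$ with a deterministic matrix $\Theta^0$, and a time-independent corrector $\varkappa_2(\xi)$. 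In particular the averaged diffusion collapses to the constant matrix $\Theta=m\Theta^0$, symmetric positive definite by Lemma \ref{l_posidef}.

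With these correctors plugged into the ansatz \eqref{w_eps}, the partial-homogenization argument of Section \ref{Proof-2} carries through: $\rho^\eps$ solves $\partial_t\rho^\eps=\lambda_\omega(t/\eps^2)\Theta^0\nabla\nabla\rho^\eps$, Birkhoff's theorem applied to $\lambda_\omega$ together with Lemma \ref{l_closedness} gives $\rho^\eps\to u^0$ (the solution of $\partial_t u^0=\Theta\nabla\nabla u^0$) in $C([0,T];L^2(\mathbb R^d))$, and Proposition \ref{prop_apriori} yields $\|u^\eps(x,t)-u^0(x^\eps,t)\|_\infty\to 0$ with
\[
x^\eps=x-\tfrac{b}{\eps}t-G^\eps_0(t),\qquad G^\eps_0(t)=b_0\,\eps\int_0^{t/\eps^2}(\lambda_\omega(\tau)-m)\,d\tau.
\]
The extension from $u_0\in\mathcal{S}(\mathbb R^d)$ to $u_0\in L^2(\mathbb R^d)$ is by density and the a priori bound.

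The main remaining step is a \emph{scalar} invariance principle for the centered stationary ergodic process $\lambda_\omega-m$. In Theorem \ref{MT}(II) the analogous step required the strong-mixing assumption \eqref{alpha-1} in order to control the $s$-dependence of $p_\infty$ through the auxiliary term $\mathop{\beta_2}\limits^{\circ}$ in the proof of Lemma \ref{L-3}; in the present product setting $p_\infty$ is independent of $s$, this auxiliary term vanishes identically, and condition 3) of Theorem \ref{MT-special} is precisely the Gordin/Maxwell--Woodroofe projection bound needed to apply \cite[Chapter VIII, Theorem 3.79]{ShJ} directly to $\lambda_\omega-m$. This yields $\eps\int_0^{t/\eps^2}(\lambda_\omega(\tau)-m)d\tau\mathop{\rightarrow}\limits^{{\cal L}}\sigma W_t$ with the scalar $\sigma$ from the statement, hence $G^\eps_0(t)\mathop{\rightarrow}\limits^{{\cal L}}b_0\sigma W_t$. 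Combining this with the continuity of the Gaussian semigroup associated with $\Theta$ acting on $u_0$, one obtains $u^0(x^\eps,t)\mathop{\rightarrow}\limits^{{\cal L}}u^0(x-b_0\sigma W_t,t)$ in the space $L^\infty((0,T);L^2(\mathbb R^d))$ with the strong topology, which together with the preceding paragraph yields \eqref{th-4}. The principal difficulty, and the novelty relative to Theorem \ref{MT}(II), is the verification that in the product case the weaker projection-type hypothesis 3) is sufficient in place of strong mixing; this is possible precisely because the $p_\infty-p_1$ term estimated in \eqref{G0Proof-4} is absent here.
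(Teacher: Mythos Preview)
Your argument is essentially correct, but the paper proves Theorem \ref{MT-special} by a quite different and shorter route: rather than specialising the corrector machinery of Sections \ref{Proof-1}--\ref{Proof-3}, it exploits the product structure directly via the random time change $s(t)=\int_0^t\lambda_\omega(r/\eps^2)\,dr$, which transforms \eqref{ANA_eps} into the \emph{autonomous} problem $\partial_s\tilde u=L^\eps\tilde u$ with deterministic kernel $a(\cdot)\mu^0(\cdot,\cdot)$. The homogenization of that problem is already known from \cite{AA}, yielding \eqref{conv-tilde}; one then writes $s(t)=mt+\delta^\eps_\omega(t)$, applies Birkhoff to get $\delta^\eps_\omega\to 0$ and the invariance principle (via \cite[VIII, Theorem 3.79]{ShJ}, exactly under your hypothesis 3)) to get $\eps^{-1}\delta^\eps_\omega\mathop{\rightarrow}\limits^{{\cal L}}\sigma W_t$, and reads off \eqref{th-4}. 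What the paper's approach buys is brevity and robustness: it avoids re-running the ansatz computation and, importantly, it does not require $\mu_\omega=\lambda_\omega\mu^0$ to satisfy the two-sided bound \eqref{lm-random}, which may fail since $\lambda_\omega$ is only assumed to have two moments. Your route, by contrast, makes transparent \emph{why} the product case is special within the general framework---$p_\infty$ collapses to the deterministic $v^0$, the correctors become time-independent, and the $\mathop{\beta_2}\limits^{\circ}$ term of Lemma \ref{L-3} disappears, so that the strong-mixing condition \eqref{alpha-1} can be replaced by the weaker projection bound 3). One caveat: you invoke Lemma \ref{l_posidef} and Lemma \ref{l_closedness} as stated, but both are proved under \eqref{lm-random} for $\mu_\omega$, i.e.\ for bounded $\lambda_\omega$; for possibly unbounded $\lambda_\omega$ you would need to redo the compactness/convergence argument for $\rho^\eps$ (most easily by the same time change, since $\rho^\eps(x,t)=u^{0,\Theta^0}(x,s(t))$), and to control the remainders $\phi^\eps$ in $L^2((0,T);L^2)$ rather than $L^\infty$, which is where the second-moment assumption on $\lambda_\omega$ enters.
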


\begin{proof}
In the case under consideration  problem \eqref{ANA_eps} takes the form 
\begin{equation}\label{pbm}
\textstyle
\partial_t u(x,t)= \lambda\big(\frac t{\eps^2}\big)\, L^\eps u(x,t), \qquad u(x,0)=u_0(x),
\end{equation}
with
$$
L^\eps u(x,t)=\frac 1{\eps^{d+2}}\int_{\mathbb R^d}a\Big(\frac{x-y}\eps\Big)\mu^0\Big(\frac{x}\eps,\frac{y}\eps\Big)\big(
u(y,t)-u(x,t)\big)dy.
$$ 
We first assume that $u_0$ is a $C_0^\infty(\mathbb R^d)$ function
and  want to make a suitable change of time in the equation in \eqref{pbm} so that \eqref{pbm} is reduced  to an autonomous evolution problem.
The structure of \eqref{pbm} suggests the following time change:
\begin{equation}\label{chatime1}
s(t) = \int_0^t \lambda\Big(\frac s{\eps^2}\Big) d s=\eps^2\int_0^{\frac t{\eps^2}} \lambda(s) d s, 
 \qquad \tilde u(x,s) = u(x, t(s)).
\end{equation}
Then
$$
\frac{\partial \tilde u(x,s)}{\partial s} = \frac{\partial u(x,t)}{\partial t}\ \frac{dt}{ds} =
 \lambda\Big(\frac t{\eps^2}\Big)\, L^\eps \tilde u(x,s) \frac{dt}{ds} = L^\eps \tilde u(x,s).
$$
Thus $\tilde u$ is the solution of the  equation
\begin{equation}\label{pbm-s}
\frac{\partial \tilde u(x,s)}{\partial s}  = L^\eps \tilde u (x,s),\quad \tilde u(x,0)=u_0(x),
\end{equation}
and we come to the homogenization problem for  autonomous parabolic equation. 

\medskip


As was mentioned in the Introduction, according to \cite{AA} there exists a constant vector $ b$ and a positive definite symmetric constant matrix $\Theta$ such that 
\begin{equation}\label{conv-tilde}
\| \tilde u^\varepsilon (x, s) \ - \  u^0(x - \frac{ b}{\varepsilon}s,s)\|\big._{L^\infty(0,T;L^2(\mathbb T^d))}
 \ \to \ 0, \qquad \varepsilon \to 0
\end{equation}
where 
$ u^0(x,s)$ is the solution of the limit problem 
$$
\partial_t u=\mathrm{div}\big(\Theta\nabla u\big)=0, \ \quad u(x,0)=u_0.
$$
Without loss of generality in what follows we assume that $m = 1$. Letting
$\eta_\omega (\tau) = \lambda_\omega(\tau) -1, \ \tau \in \R$, we have 
\begin{equation}\label{chatime}
s(t) = 
\int_0^t \lambda_\omega(\frac{s}{\varepsilon^2}) d s  =  t + \delta_\omega^\varepsilon (t)
\ \ \hbox{with }\ 
\delta_\omega^\varepsilon (t) = \int_0^t  \eta_\omega\Big(\frac{s}{\varepsilon^2}\Big) ds. 
\end{equation}


\begin{lemma}\label{l_6_1}
Under the assumptions in \eqref{th-1}
 for any $t>0$ almost surely
\begin{equation}\label{conv1}
\delta_\omega^\varepsilon (t) = \varepsilon^2 \int_0^{t/\varepsilon^2}  \eta_\omega(\tau) d \tau \to 0, \qquad \mbox{as } \; \varepsilon \to 0,
\end{equation}
and for any $T>0$ the convergence $\delta_\omega^\varepsilon (t) \to 0 $ is   uniform for $t \in [0,T]$.
Furthermore,
\begin{equation}\label{conv2}
\frac{1}{\varepsilon} \delta_\omega^\varepsilon (t) =  \varepsilon \int_{0}^{t/\varepsilon^2}  \eta_\omega(\tau) d \tau \ \mathop{\rightarrow}\limits^{{\cal L}} \ \sigma W_t, \qquad \mbox{as } \; \varepsilon \to 0,
\end{equation}
where $\mathop{\rightarrow}\limits^{{\cal L}} $ means the convergence in law in the space of continuous functions 
with values in $\mathbb R^d$. $ W_t$ is the standard Wiener process, and   
$$
\sigma^2 = 2 \int\limits_0^{\infty} \mathbb{E} (\eta(0) \, \eta(\tau)) \, d \tau.
$$

\end{lemma}

\begin{proof}
The first statement is an immediate consequence of  the law of large numbers. 
The second statement follows from \cite{ShJ}, Theorem 3.79, Chapter VIII.
\end{proof}

%

From \eqref{conv-tilde} we derive that almost surely
\begin{equation}\label{conv-back}
\|  u^\varepsilon (x, t) \ - \  u^0(x - \frac{ b}{\varepsilon}s(t),s(t))\|\big._{L^\infty(0,T;L^2(\mathbb T^d))}
 \ \to \ 0, \quad\hbox{as } \varepsilon \to 0,
\end{equation}
or, equivalently,
\begin{equation}\label{conv-back}
\|  u^\varepsilon (x+\frac{ b}{\varepsilon}t, t) \ - \  u^0(x - \frac{ b}{\varepsilon}\delta^\varepsilon_\omega(t),t+\delta^\varepsilon_\omega(t))\|\big._{L^\infty(0,T;L^2(\mathbb T^d))}
 \ \to \ 0, \ \ \hbox{as } \varepsilon \to 0.
\end{equation}
By Lemma \ref{l_6_1} taking into account the continuity of the function $u^0(x,t)$, we obtain
\begin{equation}\label{Law1}
 u^0 (x - \frac{b}{\varepsilon} \ \delta^\varepsilon_\omega(t),\ t+ \delta^\varepsilon_\omega(t))  \ \mathop{\rightarrow}\limits^{{\cal L}} \  u^0 (x - b \sigma W_t, t)
\end{equation}
in the space $L^\infty(0,T;L^2(\mathbb R^d))$.
Combining the last two limit relations we obtain the following convergence in law:
\begin{equation}\label{Law2}
u^\varepsilon_\omega (x + \frac{b}{\varepsilon} \ t, \ t)  \ \mathop{\rightarrow}\limits^{{\cal L}} \
 u^0 (x - b \sigma W_t,\ t) \qquad \mbox{as } \; \varepsilon \to 0
\end{equation}
in $L^\infty(0,T;L^2(\mathbb R^d))$.
This yeilds \eqref{th-4} for a smooth initial condition $u_0$. For the generic $u_0\in L^2(\mathbb R^d)$ the convergence 
can be  justified by approximation.
\end{proof}

\section{Appendix 1. Proof of Lemmas \ref{L2-bis} and \ref{L3}}

It is clear that the function $\tilde p^N(\xi,s):=p^N(\xi,N-s)$, where $p^N(\xi,s)$ is a solution to problem \eqref{HastC}, satisfies the equation
\begin{equation}\label{app1-1}
\partial_s  p(\xi,s) = A^\ast_\omega(N-s)p(\xi,s), \qquad p|_{s=0} = 1, \quad s \ge 0,
\end{equation}
with
\begin{equation}\label{app1-2}
\begin{array}{l}
\displaystyle
A^\ast_\omega(s)p (\xi,s) = \int\limits_{\mathbb{T}^d} \hat a(\eta - \xi) \mu_\omega (\eta,\xi;s)p(\eta,s) d \eta - G(\xi,s)p(\xi,s),
\\[2mm] \displaystyle
 G(\xi,s )= \int\limits_{\mathbb{T}^d} \hat a( \xi- \eta) \mu_\omega (\xi, \eta;s) d \eta.
 \end{array}
\end{equation}
\medskip

\noindent
{\it Proof of Lemma \ref{L2-bis}.} \\
Using two-sided bound \eqref{lm-random} on $\mu_\omega (\eta,\xi;s)$ we conclude that
the solution $\tilde p^N (\xi,s)$ of problem \eqref{app1-1}  can be bounded from below by the solution $R(\xi, s)$ of the following problem
\begin{equation}\label{app1-2bis}
 \partial_s R (\xi,s) = \alpha_1 \int\limits_{\mathbb{T}^d} \hat a(\eta - \xi) R(\eta,s) d \eta - \alpha_2 R(\xi, s), \quad  R(\xi,0) = 1,
\end{equation}
where $\alpha_1, \, \alpha_2$ are the same constants as in \eqref{lm-random}. Denote the operator on the right hand side of  \eqref{app1-2bis} by $A_\alpha$:
$$
A_\alpha R (\xi,s)= \alpha_1 \int\limits_{\mathbb{T}^d} \hat a(\eta - \xi) R(\eta,s) d \eta - \alpha_2 R(\xi, s).
$$
Then $R (\xi,s) = e^{s \, A_\alpha}1, \ s>0,$ and the Trotter formula for the sum of two bounded operators implies that $R(\xi, s)\ge 0$. The strict inequality relies on  \cite[Lemma 4.2]{AA} which states that there exist $k \in \mathbb{N}$ and $\gamma_0>0$ such that
\begin{equation*}
\hat a^{\ast k} (\xi) \ge \gamma_0, \quad \forall \xi \in \mathbb{T}^d.
\end{equation*}
Thus we obtain
$$
\tilde p^N (\xi,s) \ge R (\xi,s) >0.
$$

\bigskip

\noindent
{\it  Proof of Lemma \ref{L3}.} \\
I. {\it Lower bound}. For an arbitrary $t\geqslant 1$   on the interval $s \in [t-1,t]$ the solution $\tilde p^N(\xi,s)$ of problem \eqref{app1-1} coincides with the solution of the following problem
\begin{equation}\label{app1-3}
\partial_s  p = A^\ast_\omega(N-s)p, \quad s \in [t-1,t];  \qquad p|\big._{s=t-1} = \tilde p^N(\xi, t-1).
\end{equation}
After the change of unknown function
$$
q(\xi, s) = p(\xi, s) e^{\ \int\limits_{t-1}^s G(\xi,N- r) dr}, \quad 
 s \in [t-1,t],
$$
making straightforward rearrangements we conclude that for $s\geqslant t-1$ the function $q(\xi, s)$ is a solution to the problem
\begin{equation}\label{app1-4}
\begin{array}{l}
\displaystyle
 \partial_s q (\xi,s) = \int\limits_{\mathbb{T}^d} \hat a(\eta - \xi) \mu_\omega (\eta,\xi;N-s)   e^{\ \int\limits_{t-1}^s (G(\xi,N- r) - G(\eta,N-r)) dr} q(\eta,s) d \eta,
\\[2mm] \displaystyle
q(\xi, t-1) = \tilde p^N(\xi, t-1).
 \end{array}
\end{equation}
From \eqref{lm-random} and \eqref{app1-2} it follows that 
$$
e^{\ \int\limits_{t-1}^s (G(\xi, N-r) - G(\eta,N-r)) dr} \geqslant e^{-(\alpha_2-\alpha_1)}, \quad  s \in [t-1,t].
$$
Consequently for $ s \in [t-1,t]$ the solution $q (\xi,s)$ of problem \eqref{app1-4}  can be estimated from below by the solution $Q(\xi, s)$ of the following problem:
\begin{equation}\label{app1-5}
 \partial_s Q (\xi,s) = \alpha_1 e^{- (\alpha_2-\alpha_1)}\!\! \int\limits_{\mathbb{T}^d} \hat a(\eta - \xi) Q(\eta,s) d \eta,
\quad
Q(\xi, t-1) = \tilde p^N(\xi, t-1),
\end{equation}
where $\alpha_1$ and $\alpha_2$ are the same as in \eqref{lm-random}. The solution $Q (\xi,s), \ s \in [t-1,t], $ admits the representation
$$
 Q (\xi,s) = \tilde p^N(\xi, t-1) + \sum_{k=1}^{\infty} \frac{(s-(t-1))^k \ \alpha_1^k \ e^{-k (\alpha_2-\alpha_1)}}{k!} \ \hat a^{\ast k} \ast \tilde p^N(\xi, t-1).
$$
By Lemma 4.2 in \cite{AA} there exist $k_0 \in \mathbb{N}$ and $\gamma_0>0$ such that
\begin{equation}\label{app1-6}
\hat a^{\ast k_0} (\xi) \ge \gamma_0, \quad \forall \xi \in \mathbb{T}^d.
\end{equation}
Recalling \eqref{prop2} we derive from the latter inequality the estimate
$$
 Q (\xi,t) \ge \frac{\alpha_1^{k_0} \ e^{-k_0 (\alpha_2-\alpha_1)}}{k_0!} \gamma_0>0. 
$$
Therefore, for all $\xi \in \mathbb{T}^d$ we have
$$
\tilde p^N(\xi, t) = q(\xi,t) \ e^{ - \int\limits_{t-1}^t G(\xi, r) dr} \ge 
\frac{\alpha_1^{k_0} \ e^{-k_0 (\alpha_2-\alpha_1)}}{k_0!} \gamma_0 e^{-\alpha_2}=: \pi_1 >0.
$$
This yields the lower bound in \eqref{prop3} for $t\geqslant 1$.  For $s\in [0,1]$ we have
$$
 Q (\xi,s) = 1 + \sum_{k=1}^{\infty} \frac{s^k \ \alpha_1^k \ e^{-k (\alpha_2-\alpha_1)}}{k!} \ \hat a^{\ast k} \ast 1
 \geqslant 1,
$$
and the desired lower bound follows.

\medskip
\noindent
II. {\it Upper bound}.
From the definition of the operator $A(s)$ it follows that for any $s\in\mathbb R$ this operator is bounded
in $L^\infty(\mathbb T^d)$ and 
$$
\|A(s)\|\big._{L^\infty(\mathbb T^d)\to L^\infty(\mathbb T^d)}\leqslant 2\alpha_2.
$$
Then according to  \cite{Krein}, Chapter II, \S2 we have $\tilde p^N\in C(0,+\infty;L^\infty(\mathbb T^d))$. Since
$$
\|\tilde p^N(\cdot,t)\|_{L^\infty(\mathbb T^d)}=
\Big\|1+\int_0^t (A(N-s)\tilde p^N)(\cdot,s)ds-\int_0^t G(\cdot,s)\tilde p^N)(\cdot,s)ds\Big\|_{L^\infty(\mathbb T^d)}
$$
$$
\leqslant 1+2\alpha_2\int_0^t \|\tilde p^N(\cdot,s)\|_{L^\infty(\mathbb T^d)} ds,
$$
then by the Gronwall theorem $\|\tilde p^N(\cdot,t)\|_{L^\infty(\mathbb T^d)}\leqslant e^{2\alpha_2 t}$.

Consider an arbitrary interval $[0,T]$ and denote 
$M_T=\max\limits_{0\leqslant t\leqslant T} \|\tilde p^N(\cdot,t)\|_{L^\infty(\mathbb T^d)}$. 
Then, for any $t,\,s\in[0,T]$,
\begin{equation}\label{infy_cont}
 \|\tilde p^N(\cdot,t)-\tilde p^N(\cdot,s)\|_{L^\infty(\mathbb T^d)}\leqslant 2\alpha_2 M_T |t-s|.
\end{equation}
In particular, if $t_0\in[0,T]$ is a point where the maximum is attained and $S_1\subset\mathbb T^d$ is a set
of positive measure such that $\tilde p^N(\xi,t_0)\geqslant \big(1-\frac1{10}\alpha_1\big) M_T$ for all $\xi\in S_1$,
then 
\begin{equation}\label{infy_lobou}
\tilde p^N(\xi,t)\geqslant \frac12 M_T \quad\hbox{for all }(\xi,t)\in S_1\times\big[t_0-\frac 1{5\alpha_2},
 t_0\big].
\end{equation}

Our next goal is to estimate the integral
$$
J(\xi):=\int_{t_1}^{t_0}\int_{\mathbb T^d}\hat a(\xi-z)\mu(\xi,z,s)\tilde p^N(z,s)dzds,
$$
here $t_1$ stands for $t_0-\frac 1{5\alpha_2}$.  Denote $S_2(t)=\{\xi\in\mathbb T^d\,:\, \tilde p^N(\xi,t)\geqslant
\sqrt{M_T}\}$, and $S_3(t)=\mathbb T^d\setminus S_2(t)$.  In view of \eqref{prop2} by the Chebysheff inequality for
each $t\in[t_1,t_0]$ we obtain
$$
|S_2(t)|\leqslant \frac1{\sqrt{M_T}}.
$$
Since $\hat a(\cdot)$ is integrable, 
$$
\nu(M_T):=\sup\limits_{t\in[t_1,t_0]}\int_{S_2(t)}\hat a(\xi-z)dz=\sup\limits_{t\in[t_1,t_0]}\int_{\xi-S_2(t)}
\hat a(z)dz\to0,
$$
as $M_T\to\infty$. Then the integral $J(\xi)$ admits the following estimate:
$$
J(\xi)\leqslant \alpha_2\int_{t_1}^{t_0}\int_{\mathbb T^d}\hat a(\xi-z)\tilde p^N(z,s)dzds
$$
$$
=\alpha_2\int_{t_1}^{t_0}\int_{S_2(s)}\hat a(\xi-z)\tilde p^N(z,s)dzds+
\alpha_2\int_{t_1}^{t_0}\int_{S_3(s)}\hat a(\xi-z)\tilde p^N(z,s)dzds
$$
$$
\leqslant \alpha_2(t_0-t_1)\big(\nu(M_T)M_T+\sqrt{M_T}\big)=\frac{1}{5}\big(\nu(M_T)M_T+\sqrt{M_T}\big).
$$
Combining this estimate with \eqref{infy_lobou} for any $\xi\in S_1$ we obtain
$$ 
M_T-\frac{\alpha_1}{10}M_T\leqslant \tilde p^N(\xi,t_0)= \tilde p^N(\xi,t_1)+J(\xi)-\int_{t_1}^{t_0}G(\xi,s)\tilde p^N(\xi,s)ds
$$
$$
\leqslant M_T+\frac15\big(\nu(M_T)M_T+\sqrt{M_T}\big)-\frac12\alpha_1 M_T.
$$
After elementary rearrangement we have
$$
2\alpha_1 M_T\leqslant \big(\nu(M_T)M_T+\sqrt{M_T}\big).
$$
For large $M_T$ this inequality is contradictory. This yields the desired upper bound.

\section{Appendix 2. Proof of Lemmas \ref{L4} - \ref{Du}}

\noindent
{\it Proof of Lemma \ref{L4}.} \\
Multiplying equation  \eqref{HC} by $u(\xi, s)p^N(\xi,s)$ and integrating the resulting relation over $\mathbb T^d$ we have
\begin{equation}\label{L4-1}
\int\limits_{\mathbb{T}^d} (\partial_s u) \, u \, p^N \, d \xi = \int\limits_{\mathbb{T}^d} (A_\omega(s) u) \, u \, p^N \, d \xi.
\end{equation}
Using the relation $\partial_s(u^2 p^N)= 2(\partial_s u) u\, p^N + u^2 \, \partial_s p^N$, one can rewrite  \eqref{L4-1}
as follows:
\begin{equation}\label{L4-2}
\frac12 \int\limits_{\mathbb{T}^d} \partial_s (u^2 p^N) d \xi = \int\limits_{\mathbb{T}^d} (A_\omega(s) u) \, u \, p^N \, d \xi + \frac12 \int\limits_{\mathbb{T}^d}  u^2  \partial_s  p^N d \xi.
\end{equation}
Considering equation \eqref{HastC} we then rearrange the integral on the right-hand side of \eqref{L4-2}:
$$
\int\limits_{\mathbb{T}^d} \int\limits_{\mathbb{T}^d} \hat a(\xi - \eta) \mu_\omega(\xi,\eta; s) (u(\eta,s) - u(\xi,s)) \, u(\xi,s) \, p^N(\xi, s) d\eta d \xi + \frac12 \int\limits_{\mathbb{T}^d}  u^2(\xi,s)  \partial_s  p^N(\xi, s) d \xi
$$
$$
=\int\limits_{\mathbb{T}^d} \int\limits_{\mathbb{T}^d}\hat a(\xi - \eta) \mu_\omega(\xi,\eta; s) (u(\eta,s) - u(\xi,s)) \, u(\xi,s) \, p^N(\xi, s) \, d \eta \, d \xi
$$
$$
- \frac12 \int\limits_{\mathbb{T}^d} \int\limits_{\mathbb{T}^d} u^2(\xi,s) \hat a( \eta - \xi) \mu_\omega(\eta, \xi; s)  p^N(\eta, s) d \eta \, d \xi
$$
$$
+ \frac12 \int\limits_{\mathbb{T}^d} \int\limits_{\mathbb{T}^d} u^2(\xi,s) \hat a(\xi - \eta) \mu_\omega(\xi,\eta; s)  \, p^N(\xi, s) \, d \eta \, d \xi
$$
$$
= \int\limits_{\mathbb{T}^d} \int\limits_{\mathbb{T}^d}\hat a(\xi - \eta) \mu_\omega(\xi,\eta; s) u(\eta,s) u(\xi,s) p^N(\xi, s) \, d \eta \, d \xi
$$
$$
- \frac12 \int\limits_{\mathbb{T}^d} \int\limits_{\mathbb{T}^d}\hat a(\eta - \xi) \mu_\omega(\eta, \xi; s)  u^2(\xi,s)  p^N(\eta, s) d \eta \, d \xi
$$
$$
- \frac12 \int\limits_{\mathbb{T}^d} \int\limits_{\mathbb{T}^d} \hat a(\xi - \eta) \mu_\omega(\xi,\eta; s) u^2(\xi,s)  p^N(\xi, s) \, d \eta \, d \xi
$$
\begin{equation}\label{L4-3}
= - \frac12 \int\limits_{\mathbb{T}^d} \int\limits_{\mathbb{T}^d} \hat a(\xi - \eta) \mu_\omega(\xi,\eta; s) (u(\eta,s) - u(\xi,s))^2 \, p^N(\xi, s) \, d \eta \, d \xi.
\end{equation}
Denote by ${\cal A}^{sym}$ the operator in $L^2(\mathbb{T}^d)$ with a symmetric kernel $\hat a^{sym}(z) = \frac12 \big( \hat a(z) + \hat a (-z) \big)$:
$$
{\cal A}^{sym} u(\xi) =  \int\limits_{\mathbb{T}^d} \hat a^{sym}(\xi- \eta)(u( \eta) - u(\xi)) d \eta.
$$
The operator ${\cal A}^{sym}$ is the difference of a compact operator and the identity operator $I$. This operator has an eigenvalue at 0 with the corresponding eigenfunction equal to $1$, and the rest of the spectrum is negative.
By the Krein-Rutman theorem $0$ is a simple eigenvalue. The distance from $0$ to the rest of the spectrum is strictly positive. These spectral properties of ${\cal A}^{sym}$ imply that for any $u\in L^2(\mathbb{T}^d)$ we have:
\begin{equation}\label{L4-4}
 \big( -{\cal A}^{sym}u, u \big) \ge C_1 \| \tilde u \|_{L^2(\mathbb{T}^d)},\quad C_1>0,
\end{equation}
where $\tilde u  = u - (u, 1)$. 
Using \eqref{L4-4} we derive the following estimate for the integral on the right-hand side of  \eqref{L4-3}:
$$
\frac12 \int\limits_{\mathbb{T}^d} \int\limits_{\mathbb{T}^d} \hat a(\xi - \eta) \mu_\omega(\xi,\eta; s) (u(\eta,s) - u(\xi,s))^2 \, p^N(\xi, s) \, d \eta \, d \xi
$$
$$
\geqslant \frac 12 \alpha_1 \, \pi_1 \, \big( -{\cal A}^{sym}u, u \big) \geqslant C_2  \| \tilde u \|_{L^2(\mathbb{T}^d)},\quad 
C_2>0.
$$
We also introduce a function $\hat u = u - C(N)$. By Lemmas \ref{L1} - \ref{L2},  $C(N)= (u, p^N)$
and $(1, p^N)=1$, therefore $\hat u \perp p^N$.
 Observe that $\hat u $ might have  non-zero projections both on the space of constants and on the space orthogonal
 to constants. Moreover, the projection of $\hat u$ on the space orthogonal to constants coincides with $\tilde u$:
\begin{equation}\label{hat_ineq}
\hat u - (\hat u, 1) = u - C(N) - (u-C(N), 1) = u - (u,1) = \tilde u.
\end{equation}
\begin{lemma}\label{l_impineq}
There exists a constant $C_3<1$ such that
\begin{equation}\label{L4-5}
\| \tilde u \|_{L^2(\mathbb{T}^d)} \ge C_3 \| \hat u \|_{L^2(\mathbb{T}^d)}
\end{equation}
\end{lemma}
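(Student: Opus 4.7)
The plan is to use the orthogonality $\hat u \perp p^N$ together with the uniform bounds on $p^N$ to control the constant part of $\hat u$ by its orthogonal-to-constants part $\tilde u$. Since by \eqref{hat_ineq} the only difference between $\hat u$ and $\tilde u$ is the constant $(\hat u,1)$, proving $|(\hat u,1)|\leqslant K\|\tilde u\|_{L^2(\mathbb T^d)}$ for a deterministic $K$ will immediately deliver $\|\tilde u\|\geqslant C_3\|\hat u\|$ with $C_3=(1+K^2)^{-1/2}<1$.

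First, I would split $\hat u$ orthogonally as $\hat u=(\hat u,1)\cdot 1+\tilde u$, which (normalizing $|\mathbb T^d|=1$) yields the Pythagorean identity
\begin{equation*}
\|\hat u\|_{L^2(\mathbb T^d)}^2 = (\hat u,1)^2 + \|\tilde u\|_{L^2(\mathbb T^d)}^2.
\end{equation*}
Next, I would exploit Lemmas \ref{L1} and \ref{L2}: since $(\hat u,p^N)=(u,p^N)-C(N)(1,p^N)=C(N)-C(N)=0$ and $(1,p^N-1)=0$, writing $p^N=1+(p^N-1)$ gives
\begin{equation*}
0=(\hat u,p^N)=(\hat u,1)+(\hat u,p^N-1)=(\hat u,1)+(\tilde u,p^N-1),
\end{equation*}
where in the last equality the constant-part contribution $(\hat u,1)(1,p^N-1)$ vanishes. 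Therefore $(\hat u,1)=-(\tilde u,p^N-1)$, and by Cauchy--Schwarz
\begin{equation*}
|(\hat u,1)|\leqslant \|p^N-1\|_{L^2(\mathbb T^d)}\,\|\tilde u\|_{L^2(\mathbb T^d)}.
\end{equation*}

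The uniform bounds $0<\pi_1\leqslant p^N\leqslant \pi_2$ from Lemma \ref{L3} give $\|p^N-1\|_{L^2(\mathbb T^d)}\leqslant K:=\max(\pi_2-1,1-\pi_1)$, a \emph{deterministic} constant independent of $N$, $s$, and $\omega$. Combining the pieces,
\begin{equation*}
\|\hat u\|_{L^2(\mathbb T^d)}^2\leqslant K^2\|\tilde u\|_{L^2(\mathbb T^d)}^2+\|\tilde u\|_{L^2(\mathbb T^d)}^2=(1+K^2)\|\tilde u\|_{L^2(\mathbb T^d)}^2,
\end{equation*}
so the conclusion holds with $C_3=(1+K^2)^{-1/2}\in(0,1)$.

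There is no real obstacle here; the only point that requires care is noting that, because $p^N$ itself has integral $1$ on $\mathbb T^d$, the function $p^N-1$ is automatically orthogonal to constants, which is what allows us to replace $\hat u$ by $\tilde u$ in the inner product $(\hat u,p^N-1)$. The deterministic nature of $C_3$ is crucial for the subsequent use of \eqref{L4-5}, and it is secured entirely by the deterministic bounds in Lemma \ref{L3}.
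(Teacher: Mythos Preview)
Your proof is correct and arguably cleaner than the paper's. Both arguments start from the same ingredients --- the orthogonality $(\hat u,p^N)=0$, the normalization $(1,p^N)=1$, and the pointwise bounds $\pi_1\leqslant p^N\leqslant\pi_2$ --- but combine them differently. You exploit directly that $p^N-1$ is orthogonal to constants to obtain the exact identity $(\hat u,1)=-(\tilde u,p^N-1)$, then Cauchy--Schwarz and Pythagoras give $C_3=(1+K^2)^{-1/2}$ with $K=\max(\pi_2-1,1-\pi_1)$. The paper instead first proves $|(\hat u,1)|\leqslant C_0\|\hat u\|$ with $C_0=1-\tfrac{1}{2\pi_2^2}<1$ by writing $1=(p^N,1-\hat u)\leqslant\|p^N\|\,\|1-\hat u\|$ and expanding $\|1-\hat u\|^2$, and then applies the triangle inequality $\|\hat u\|\leqslant\|\tilde u\|+|(\hat u,1)|$ to arrive at $C_3=(2\pi_2^2)^{-1}$. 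Your route is shorter, yields a sharper constant (Pythagoras in place of the triangle inequality), and makes transparent exactly where the deterministic bound on $p^N$ enters.
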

\begin{proof}
First we show that $(\hat u,1)<C_0\|\hat u\|_{L^2(\mathbb T^d)}$ for some constant $C_0<1$ that does not depend on $u$. Without loss of generality we assume that $\|\hat u\|_{L^2(\mathbb T^d)}=1$.  Then
$$
\|(1-\hat u)\|^2_{L^2(\mathbb T^d)}=1+\|\hat u\|^2_{L^2(\mathbb T^d)}-2(1,\hat u)=
2\big[\|\hat u\|^2_{L^2(\mathbb T^d)}-(1,\hat u)\big] 
$$
Using the properties of $p^N$ we have
$$
1=(1,p^N)= (p^N,\hat u)+(p^N,1-\hat u)=(p^N,1-\hat u) \leqslant \|p^N\|_{L^2(\mathbb T^d)}
\|1-\hat u\|_{L^2(\mathbb T^d)}
$$
Combining the last two relations yields
$$
1\leqslant \|p^N\|^2_{L^2(\mathbb T^d)}\|1-\hat u\|^2_{L^2(\mathbb T^d)}\leqslant 
2\pi_2^2\big[\|\hat u\|^2_{L^2(\mathbb T^d)}-(1,\hat u)\big]
$$
Therefore, 
$$
\|\hat u\|_{L^2(\mathbb T^d)}\leqslant 
2\pi_2^2\big[\|\hat u\|_{L^2(\mathbb T^d)}-(1,\hat u)\big];
$$
here we have used the fact that $\|\hat u\|_{L^2(\mathbb T^d)}=1$. Finally,
$$
|(1,\hat u)|\leqslant \big(1-\frac 1{2\pi_2^2}\big)\|\hat u\|_{L^2(\mathbb T^d)}.
$$
Letting $C_0=\big(1-\frac 1{2\pi_2^2}\big)$ from \eqref{hat_ineq} we deduce
$$
\|\hat u\|_{L^2(\mathbb T^d)}=\|\tilde u+(1,\hat u)\|_{L^2(\mathbb T^d)}\leqslant
\|\tilde u\|_{L^2(\mathbb T^d)}+|(1,\hat u)|\leqslant \|\tilde u\|_{L^2(\mathbb T^d)}+
C_0\|\hat u\|_{L^2(\mathbb T^d)}.
$$
Thus $\|\hat u\|_{L^2(\mathbb T^d)}\leqslant(1-C_0)^{-1}\|\tilde u\|_{L^2(\mathbb T^d)}=2\pi_2^2
\|\tilde u\|_{L^2(\mathbb T^d)}$, and \eqref{L4-5} holds with $C_3=(2\pi_2^2)^{-1}$.

\end{proof}

From  \eqref{L4-2} - \eqref{L4-5} it follows that
\begin{equation}\label{L4-6}
\int\limits_{\mathbb{T}^d} \partial_s (u^2 p^N) d \xi \le - C_4 \int\limits_{\mathbb{T}^d}  (u - C(N))^2 \, d \xi.
\end{equation}
Due to \eqref{prop1}--\eqref{prop2} we have
\begin{equation}\label{L4-7}
\int\limits_{\mathbb{T}^d} \partial_s (u^2 p^N) d \xi =   \int\limits_{\mathbb{T}^d} \partial_s \big((u - C(N))^2 \, p^N \big) \,d \xi.
\end{equation}
Taking into account this relation and the statement of Lemma \ref{L3} we can rewrite inequality \eqref{L4-6} 
in the following form:
\begin{equation}\label{L4-8}
\partial_s \int\limits_{\mathbb{T}^d}  (u - C(N))^2 \, p^N  d \xi \le - C_5 \int\limits_{\mathbb{T}^d}  (u - C(N))^2 \,  p^N \, d \xi
\end{equation}
with $C_5>0$.
If we denote $F(s) = \int\limits_{\mathbb{T}^d}  (u (\xi,s)- C(N))^2 \, p^N (\xi,s)  d \xi$,  then by \eqref{L4-8} $F(s)$ is a decreasing
non-negative function. 
By Theorem 3.1.1. in \cite{Laksh} inequality \eqref{L4-8} implies that
\begin{equation}\label{L4-9}
F(s) \le F(0) e^{- C_5 \, s} \ \ \hbox{for all } s \in [0,N].
\end{equation}
Considering the definition of $C(N)$ we also have  $F(0)\le \pi_2 \| u_0 \|^2_{L^2(\mathbb{T}^d)}$. 
Taking $s=N$ and using again the two-sided estimate \eqref{prop3} 
we obtain from \eqref{L4-9} the required estimate \eqref{prop4}.
Lemma \ref{L4} is proved.

\bigskip
\noindent
{\it Proof of Lemma \ref{L4-add}.} \\
In the remaining part of this section for brevity we denote $\|\cdot\|_{L^2(\mathbb T^d)}$ simply by $\|\cdot\|$. 
The sequence $\{ C(N) \}, N \to \infty,$ is a Cauchy sequence. Indeed, for any $K>0$ using \eqref{prop4} we get
\begin{equation}\label{L4-10}
|C(N) - C(N+K)|  \le \| C(N) - u(\cdot, N)\| + \| u(\cdot, N) - C(N+K)\| \le 2 C_4 e^{-\gamma_0 N} \|u_0\|.
\end{equation}
Therefore,  the sequence $\{C(N)\}$ converges, we denote its limit by $C_{\infty}$.
The inequality \eqref{prop4-add1} is the immediate consequence of \eqref{L4-10} and  \eqref{prop4-1}. The lemma is proved.

\bigskip
\noindent
{\it Proof of Lemma \ref{L4-bis}.} \\
Recalling that  $p^N$ and $p^{N+K}$ are solutions of the problems
$$
\left\{\begin{array}{c}
-\partial_s p^N=A^*(s)p^N\\[2mm]
p^N(\xi,N)=1
\end{array}\right.
\quad
\hbox{ and }
\quad
\left\{\begin{array}{c}
-\partial_s p^{N+K}=A^*(s)p^{N+K}\\[2mm]
p^N(\xi,N+K)=1,
\end{array}
\right.
$$
we conclude that  $q^N(\xi,s)= p^N (\xi,s) - p^{N+K} (\xi,s), \ s \in [0,N]$, 
satisfies the problem
$$
-\partial_s q^N=A^*(s)q^N,\qquad   
q^N(\xi,N)=q_0(\xi),
$$
with $q_0(\xi) = 1 - p^{N+K}(\xi, N)$.
In follows from Lemma \ref{L1} and \ref{L2} that
\begin{equation}\label{L4-bis-1}
\int\limits_{\mathbb{T}^d} q^N (\xi,s) d \xi = 0, \quad \int\limits_{\mathbb{T}^d} u(\xi,s)  q^N(\xi, s)  d \xi= C(N) - C(N+K)
\end{equation}
for all $ s \in [0,N]$.

Using \eqref{prop4-add1},\eqref{prop3}  and \eqref{L4-bis-1}  we get
$$
 \Big|  \int\limits_{\mathbb{T}^d} u_0(\xi)  q^N(\xi, 0)  d \xi \Big| =  \Big|  \int\limits_{\mathbb{T}^d} u(\xi,N)  q^N(\xi, N)  d \xi \Big|
$$
$$
= \Big|  \int\limits_{\mathbb{T}^d} (u(\xi,N) - C_\infty)  q^N(\xi, N)  d \xi + C_\infty  \int\limits_{\mathbb{T}^d} q^N(\xi, n)  d \xi  \Big|
$$
$$
\le \|u(\cdot, N) - C_{\infty}\| \, \|q^N(\cdot, N)\| \le C_5 \, e^{-\gamma_0 N} \|u_0\| \, \|q_0\| \le C_6 \, e^{-\gamma_0 N} \|u_0\|.
$$
Thus we obtain the following estimate
\begin{equation}\label{L4-bis-2}
 |(u_0, q^N(\cdot,0))| = \Big|  \int\limits_{\mathbb{T}^d} u_0(\xi)  q^N(\xi, 0)  d \xi \Big| \le  C_6 \, e^{-\gamma_0 N} \|u_0\|,
\end{equation}
that is valid for any $u_0 \in L^2(\mathbb{T}^d)$. Consequently,
\begin{equation}\label{L4-bis-3}
\| q^N(\cdot, 0)\|= \sup_{\|u_0\|=1} |(u_0, q^N(0))| \le  C_6 \, e^{-\gamma_0 N}.
\end{equation}
In the same way, 
for any $ s \in [0,N)$, 
we obtian
\begin{equation}\label{L4-bis-3}
\| q^N(\cdot, s)\|= \sup_{\|u\|=1} |(u, q^N(s))| \le  C_6 \, e^{-\gamma_0 (N-s)}.
\end{equation}
This completes the proof of Lemma \ref{L4-bis}.

\bigskip
\noindent
{\it Proof of Proposition \ref{uniq}.} \\
Our goal is to show that the limit
\begin{equation}\label{uniq-P1}
p_\infty(\xi, s) = \lim\limits_{N \to \infty} p^N(\xi, s)
\end{equation}
exists and  $p_\infty(\xi, s)$ is a stationary solution of the equation $-\partial_s  p = A^\ast_\omega(s)p$.
According to estimate \eqref{prop4-bis}, for any $s\in\mathbb R$  $\{ p^N(\cdot,s) \}$ is a Cauchy sequence 
in $L^2(\mathbb T^d)$, as $N \to \infty$. Therefore, the limit \eqref{uniq-P1} exists for any $s \in \mathbb{R}$:
\begin{equation}\label{lim_pinf}
\|p^N(\cdot,s) - p_\infty(\cdot, s)\| \to 0 \;\;\mbox{as } \; N \to \infty,
\end{equation}
and we can show in the same way as above that
$$
\|p^N(\cdot,s) - p_\infty(\cdot, s)\| \le C_7 \, e^{- \gamma_0 (N-s)}, \quad s<N.
$$
Moreover, the convergence is uniform on any bounded interval $[-L,L]$, i.e.
$$
\lim_{N \to \infty} \, \sup_{s \in [-L,L]} \|p^N(\cdot,s) - p_\infty(\cdot, s)\|\to 0.
$$
Due to \eqref{lim_pinf} the function $p_\infty$ inherits the bounds and normalization condition given in 
\eqref{prop2}-\eqref{prop3}. This yields \eqref{uniq-2}.

Using \eqref{uniq-2} and  the boundedness of operator $A^\ast$ we conclude that $p_\infty (\xi,s) \in C(\mathbb{R}, \, L^2(\mathbb{T}^d))$. Indeed,
$$
\|p_\infty(\xi, t) - p_\infty (\xi, s) \| = \| \int_{s}^{t} \partial_\tau p_\infty (\xi, \tau) d \tau  \| =  \| \int_{s}^{t} A^\ast(\tau) p_\infty (\xi, \tau) d \tau \| \le C_8 \, |t-s|.
$$

Since $\mu(\xi, \eta; s)$ is a stationary field, then 
$p^{N+s}(\xi,s)$ is also a stationary field for any $N>0$.  Since $p_\infty(\cdot,s)=\lim\limits_{N\to\infty}
p^{N+s}(\cdot, s)$,  we conclude that $ p_\infty(\xi,s)$ is a stationary field. Thus,
$$
p_\infty(\xi, s, \omega) = p_\infty(\xi, 0, \tau_s \omega).
$$

To prove the uniqueness of $ p_\infty(\xi,s,\omega)$ (up to a multiplicative constant) we assume that there exists another stationary solution $q (\xi,s,\omega)$ to the problem $-\partial_t  q = A^\ast_\omega(s)q$.
By the same arguments as in the proof of Lemma \ref{L2} we have
\begin{equation}\label{uniq-P2}
\int_{\mathbb{T}^d}q (\xi,s,\omega) d\xi = (q (\cdot, s,\omega), \, 1) = const.
\end{equation}
We denote this constant by $\alpha$. Assume first that $\alpha =0$. Then 
following the line of the proof of Lemma \ref{L4-bis} we obtain
%
\begin{equation*}\label{L4-bis-33}
\| q(\cdot, 0,\omega)\| \leqslant  C_6 \, e^{-\gamma_0 N}\, \|q(\cdot, N,\omega)\|
\end{equation*}
for all $N>0$.
Taking the average yields the following estimate:
\begin{equation}\label{exp_ubyv}
\mathbb{E} \| q(\cdot, 0, \omega)\| \le  C_6 \, e^{-\gamma_0  N} \ \mathbb{E} \| q(\cdot, N, \omega)\|
\end{equation}
  Due to the stationatity of $q(\cdot,s)$ we have
  $\mathbb{E} \| q(\cdot, 0, \omega)\| =   \mathbb{E} \| q(\cdot, N, \omega)\| $. Combining this relation with \eqref{exp_ubyv} we conclude that $ \mathbb{E} \| q(\cdot, s, \omega)\| = 0$, and thus $q(\xi,s,\omega) \equiv 0$ a.s.

If $\alpha \neq 0$, 
then $\varrho = \alpha p_\infty - q $ is a stationary solution of the equation $-\partial_t  q = A^\ast_\omega(s)q$, such that the integral for $\varrho(\cdot, s)$  over $\mathbb T^d$ 
is equal to 0. Thus, $\varrho \equiv 0$ and consequently, $q = {\alpha} p_\infty$.

This completes the proof of Proposition \ref{uniq}.

\bigskip
\noindent
{\it Proof of Lemma \ref{Du}.} \\
Using Duhamel's principle for inhomogeneous linear evolution equation \eqref{Du-1} we obtain the following formal representation for its solution: 
\begin{equation}\label{DuP-1}
v(\xi,t) =  \int\limits_{-\infty}^t U(t, \tau) g_\omega(\xi,\tau) \, d \tau, 
\end{equation}
here $U(t,s)$ is the linear operator that maps the initial condition $g_\omega(\xi,s)$ 
in the problem 
$$
\partial_\tau u=A_\omega(\tau)u,\qquad u(\xi,s)= g_\omega(\xi,s),
$$ 
to the solution $u(\xi,t)$ of this problem evaluated at time $t$. This operator is called evolution operator.
Our goal is to show that the integral on the right-hand side of \eqref{DuP-1} converges and that indeed the function
defined by this integral is
a solution of equation  \eqref{Du-1}. The stationarity of this function is clear.
Taking into account \eqref{Du-3},
by Lemma \ref{L4-add} (see \eqref{prop4-add1}) we obtain
\begin{equation}\label{DuP-3}
  \| U(t, \tau) g_\omega(\cdot,\tau)\|_{L^2(\mathbb{T}^d)} \le 
  C_2 e^{-C_3(t-\tau)}\|g_\omega(\cdot,\tau)\|_{L^2(\mathbb T^d)}.
\end{equation}
This implies convergence of the integral in \eqref{DuP-1}.  For an arbitrary $s\in\mathbb R$ the function 
$v_s(\xi,t) =  \int\limits_{s}^t U(t, \tau) g_\omega(\xi,\tau) \, d \tau$ is a solution of the problem
$\partial_t u(\xi,t)=A(t)u(\xi,t)+ g(\xi,t)$ for $t>s$,\  $u(\xi,s)=0$. Passing to the limit $s\to\ -\infty$
we conclude that $v(\xi,t)$ defined in formula \eqref{DuP-1} is a solution of equation \eqref{Du-1}.


\end{document}